\numberwithin{equation}{section}
\newcommand{\disp}{\displaystyle}
\newtheorem{theorem}{Theorem}[section]
\newtheorem{proposition}[theorem]{Proposition}
\newtheorem{corollary}[theorem]{Corollary}
\newtheorem{definition}[theorem]{Definition}
\newtheorem{example}[theorem]{Example}
\newtheorem{Remarque}[theorem]{Remark}
\newtheorem{property}[theorem]{Property}
\begin{document}
\date{}
\title{\LARGE The First Eigenvalue of the Kohn-Laplace
 Operator in the Heisenberg Group\\}

\author{  Najoua Gamara\footnote{Corresponding authors; ngamara7@gmail.com} \;\; Akram  Makni}
 \maketitle
 \date{ }
\begin{abstract}
 In this paper, by extending  the notions of harmonic transplantation and harmonic radius in the Heisenberg group,  we
  give an upper bound for the first eigenvalue for the following Dirichlet problem:
$$(P_{\Omega})
\left\{ \begin{array}{lllll}
      -\Delta_{\mathbb{H}^1} u & = & \lambda u & \mbox{in} & \Omega  \\
     u & = & 0 & \mbox{on} & \partial \Omega,
      \end{array} \right.$$
      where $ \Omega $ is a regular bounded domain of $ \mathbb{H}^1$ with smooth boundary and $\Delta_{\mathbb{H}^1}$ is the Kohn-Laplace operator.
      Using  the results  of  P.Pansu given in $ \cite{6, 7},$  which give the relation between  the volume of $\Omega$  and the
perimeter of its boundary.
      we prove the following result

$$ \lambda_{1}( \Omega ) \leq C_{\Omega} \displaystyle \frac{ l_{11}^2 }{ \displaystyle \max_{ \xi \in \Omega } r_{\Omega}^2(\xi)} $$
 where $l_{11} $ is the first strictly positive zero of the Bessel function of first kind and
order 1, $ C_{\Omega} $ is a constant depending of $ \Omega $ and
$r_{\Omega}(\xi) $ is  the harmonic radius of $ \Omega $ at a point
$\xi$ of $\Omega.$
\end{abstract}
\vspace*{0.5cm}
 \textbf{Key words:}  Kohn-Laplace operator, First
eigenvalue, Harmonic Radius, Harmonic Transplantation, Green's
Function, Heisenberg group. \vspace*{0.5cm}

\textbf{MSC $2010$:} 35J08, 35P15, 35R03, 47J10, 49J35, 49J40.

 \tableofcontents
 \section{ Introduction }

The method of harmonic transplantation as proposed by Hersch in $
\cite{1} $ is a technique to construct
 test functions for variational problems of the form

$$ J\left[ \Omega \right]  = \displaystyle  \inf_{ u \in H_{0}^{1} }    \frac{\int_{\Omega}  \left| \nabla u \right|^2 \ dy}{\int_{\Omega}  \left|  u \right|^2 \ dy},  $$

where $ \Omega $ is a domain of $ \mathbb{R}^{n} $, $ n \geq 3 $
with smooth boundary. The harmonic transplantation is a
generalization of the conformal transplantation for simply connected
planar domain.
 This method has numerous applications in mathematical physics and function theory
  $\cite{1,2,3} $.
 In particular it has been used to give estimates from above of the   functional $ J \left[ \Omega \right],$  in contrast to the method
  based on symmetrization techniques which gives estimates from below.
The harmonic transplantation is connected to the Green's function $
G_{\left\{\Omega; x \right\}} $ of $ \Omega $ with Dirichlet
 boundary condition, which is the solution of

$$ \left\{ \begin{array}{lllll}
 \Delta G_{\left\{\Omega; x \right\}} & = & -\delta_{x} & \mbox{in} & \Omega  \\
 G_{\left\{\Omega; x \right\}} & = & 0 & \mbox{on} & \partial \Omega.
 \end{array} \right. $$

The Green's function can be decomposed as :

 $$  G_{\left\{\Omega; x \right\}}(y) = \gamma \left( F_{x}(y) - H_{\left\{\Omega; x \right\}}(y)\right)  $$

 with

 $$ \gamma = \left\lbrace \begin{array}{ll}

 \displaystyle \frac{1}{2\pi} & n = 2 \\ \\

 \displaystyle \frac{\Gamma(n/2)}{2(n-2)\pi^{n/2}} & n \geq 3

 \end{array} \right. $$

$$  F_{x}(y) = F(\|y- x\|)= \left\lbrace \begin{array}{ll}

- \ln ( \| y-x \|) &  n = 2 \\ \\

        \| y-x \|^{2-n} & n \geq 3

        \end{array} \right.   $$

and $ H_{\left\{\Omega; x \right\}}$ is the solution of

$$ \left\{ \begin{array}{lllll}
 \Delta H_{\left\{\Omega; x \right\}} & = & 0 & \mbox{in} & \Omega  \\
 H_{\left\{\Omega; x \right\}} & = & F_{x} & \mbox{on} & \partial \Omega ,
 \end{array} \right. $$
 The functions $  F_{x}$ and $ H_{\left\{\Omega; x \right\}} $ are  called respectively the singular part and the regular part of
 $ G_{\left\{\Omega; x \right\}} $.
The leading term of the regular part of the Green's function $$
t(x):=  H_{\left\{\Omega; x \right\}}(x)$$ is called  Robin's
function
 of $\Omega$ at $x.$
 For $ n\geq 3,$ the harmonic radius $ r(x) $ of $ \Omega $ at $ x $ defined in  $ \cite{4} $   by
 $$F(r(x))= t(x)$$
 which gives
 $$  r(x) =  \displaystyle   H_{\left\{\Omega; x \right\}}^{ \frac{1}{2-n}}(x) .$$
 The harmonic radius vanishes on the boundary  $\partial\Omega$ and takes its maximum at the so called harmonic centers of $\Omega.$

 Let $\Omega$ be a domain  of $ \mathbb{R}^{n} $ with smooth boundary, $ n \geq 3 $ and fix $x\in \Omega.$
 If $ u : B(0, r(x)) \rightarrow \mathbb{R}_{+} $ is a radial function and $ \mu $ is defined by
 $ u(y) = \mu \left( G_{\left\{B(0, r(x)); 0 \right\}}(y) \right) ,$ then

     $$ U(y) = \mu \left( G_{\left\{\Omega; x \right\}}(y) \right) $$

defines the harmonic transplantation of $ u $ into $ \Omega .$ \\
The harmonic transplantation has the following properties:
\begin{enumerate}
\item It preserves the Dirichlet integral
\begin{equation} \label{hersh1} \int_{B(0, r(x))}  \left| \nabla u  \right|^2 \ dy  = \int_{\Omega}  \left| \nabla U \right|^2 \ dy \end{equation}

\item For  every positive function $ \varphi $

\begin{equation} \label{hersh2} \int_{B(0, r(x))}  \varphi(u)  \ dy  \leq \int_{\Omega}  \varphi(U) \ dy .\end{equation}
\end{enumerate}
The first eigenfunction of the Dirichlet problem on $ B(0, r(x)) $
is radial and positive, then as proved by Hersch in $ \cite{4},$ by
using Rayleigh's
 principle, ($ \ref{hersh1} $) and ($ \ref{hersh2} $), one obtain  $ \forall x \in \Omega $

$$ \lambda_{1}(\Omega) \leq \frac{\int_{\Omega}  \left| \nabla U \right|^2 \ dy}{\int_{\Omega}  \left|  U \right|^2 \ dy} \leq \frac{ \int_{B(0, r(x))}  \left| \nabla u  \right|^2 \ dy }{\int_{B(0, r(x))}  \left| u  \right|^2 \ dy } = \lambda_{1}(B(0, r(x))), $$

hence

\begin{equation} \label{lambda1}  \lambda_{1}(\Omega) \leq \lambda_{1}(B(0, \max_{\Omega} r(y))). \end{equation}

Recall that the harmonic transplantation was also extended  to
spaces of constant curvature by C. Bandle, A. Brillard, and M.
Fluscher, one can see $ \cite{5}.$\\

In this work, we will extend the notions of harmonic transplantation
and harmonic radius to the Heisenberg group $ \mathbb{H}^1.$ More
precisely, we will define counter parts of the harmonic
transplantation and the harmonic radius in the subriemannian
settings. Our aim is to give an upper bound for the first eigenvalue
for the following Dirichlet problem:

$$(P_{\Omega})
\left\{ \begin{array}{lllll}
      -\Delta_{\mathbb{H}^1} u & = & \lambda u & \mbox{in} & \Omega  \\
     u & = & 0 & \mbox{on} & \partial \Omega,
      \end{array} \right.$$

where $\Omega$ is a regular bounded domain of   the Heisenberg group
$ \mathbb{H}^1$ with smooth boundary.\\
 The existence of eigenvalues
for the Dirichlet problem $(P_{\Omega})$ has been proved by X.Luo
  and P.Niu in $ \cite{10,11,12} $ using the Kohn inequality (one can see $ \cite{13}$) for the vector fields generating the horizontal
   tangent bundle of the Heisenberg group  together with the spectral properties of compact
   operators.\\
The method, we will use in the present work
 is based on technics related to the harmonic transplantation method
 and a result of P.Pansu  given in $ \cite{6, 7} $ which give the relation between  the volume of $\Omega$  and the
perimeter of its boundary. But, we know that we won't lead to a
similar formula to ($ \ref{lambda1}$)
  since Heisenberg balls or Koranyi balls are not isoperimetric regions in the Heisenberg
group $ \mathbb{H}^1.$\\

  The isoperimetric problem as introduced by P.Pansu in $ \cite{6, 7} $ consists to find  subsets $ I $ of  $ \mathbb{H}^1 $ among all subsets $ I' $  with the same
   Riemannian volume as $ I $ associated to the  left invariant Riemannian metric such that

 \begin{equation} \label{pansu1} P_{\mathbb{H}^1}(I) \leq P_{\mathbb{H}^1}(I') \end{equation}

  where $ P_{\mathbb{H}^1} ( I ) $ denotes  the horizontal perimeter of $ I.$  The set $ I $ is called an isoperimetric
   region in $ \mathbb{H}^1.$
   The existence of isoperimetric regions was proved by G. P. Leonardi and S. Rigot in $ \cite{9} $ for the more general context
    of Carnot groups.\\
For any bounded open set $ \Omega $ in $ \mathbb{H}^1 $ with $ C^1 $
boundary, P.Pansu proved in $ \cite{6, 7} $ the existence of a
constant $ C
> 0 ,$ so that
\begin{equation} \label{pansu2}  \vert \Omega \vert^{ \frac{3}{4}}  \leq C P_{\mathbb{H}^1}( \Omega ). \end{equation}

and  conjectured that any isoperimetric region of $ \mathbb{H}^1 $
 bounded by a smooth surface is congruent to a bubble set:

$$ \mathbb{B} (0, R) = \left\{ (z, t) \in \mathbb{H}^1 , |t| < R^2 arccos \left( \frac{ |z|}{R}\right) + |z| \sqrt{R^2 - |z|^2}, \ |z| < R \right\}. $$
for some $R>0$. Recall that $ \mathbb{B} (0, R) $ is obtained by
rotating around the $ t $-axis the geodesic joining  the two poles
 $ -\frac{\pi}{2} R^2 $ and $ \frac{\pi}{2} R^2 $.  The boundary of $ \mathbb{B} (0, R) $  is

 $$\mathbb{S}_{R}= \left\{ (z, t) \in \mathbb{H}^1 , |t| = R^2 arccos \left( \frac{ |z|}{R}\right) + |z| \sqrt{R^2 - |z|^2}, \ |z| < R \right\} .$$

In $ \cite{8} ,$  M.Ritore and C.Rosales proved that an
isoperimetric region in the class of Heisenberg group sub domains
with  $ C^2 $ surfaces
  as boundaries is congruent to a subset of $\mathbb{H}^1$  having $\mathbb{S}_{R}$ as boundary. A similar result was
  obtained by R. Monti and M. Rickly  in $\cite{24}$ for convex euclidian domains $\mathbb{H}^1.$

 In the present work: we won't use isoperimetric regions and the isoperimetric
 inequality in the Heisenberg group $\mathbb{H}^1$ with the method of harmonic transplantation to determine an upper
bound of the first eigenvalue $\lambda_{1}(\Omega)$ of
$(P_{\Omega})$ as it was the case for the Euclidian settings.
Instead, we will use the result of P.Pansu $\cite{6, 7}$ given above
by \eqref{pansu2}. Our main results are\\\\
\textbf{Theorem }\ref{eigenvalue}\\
 Let $ \Omega $ be a  bounded
domain of $ \mathbb{H}^1 $ with smooth boundary, such that $\Omega$
and $\mathbb{H}^{1}\setminus \bar{\Omega}$ satisfy the uniform
exterior ball  property (\ref{boulexter}) and let $\xi$ be any point
in $\Omega,$ we have

   $$ \lambda_{1}( \Omega ) \leq    C_{\Omega}(\xi) \displaystyle \frac{l_{11}^2}{r_{\Omega}^2(\xi)}$$

   where $ l_{11} $ is the first strictly positive zero of the Bessel function $ J_{1}(l)$  and $ r_{\Omega}(\xi) $ is the harmonic radius
   of $ \Omega $ at $ \xi.$\begin{flushright}
    $\square$
\end{flushright}

    \textbf{Theorem }\ref{Harmonic center}\\
Let $ \Omega $ be a  bounded domain of $ \mathbb{H}^1 $ with smooth
boundary, such that $\Omega$ and $\mathbb{H}^{1}\setminus
\bar{\Omega}$ satisfy the uniform exterior ball  property
(\ref{boulexter}) and let $\xi$ be any point in $\Omega,$ we have
   $$ \lambda_{1}( \Omega ) \leq C_{\Omega}\frac{ l_{11}^2 }{ \displaystyle \max_{ \xi \in \Omega } r_{\Omega}^2(\xi)}$$

   with $ C_{\Omega} = \displaystyle \min_{ \xi \in A_{\Omega}} C_{\Omega}(\xi),$ where  $A_{\Omega}$ is the set of harmonic centers of
   $\Omega.$\begin{flushright}
    $\square$
\end{flushright}
The second theorem is a refined version of the first one.\\

\vspace*{0.2cm}

\par The paper is organized as follows: in Section 2, we will introduce the general framework, we begin by recalling the local structure
 of the Heisenberg group, the Carnot-Carath\'eodory distance and the Koranyi balls. The section 3 is devoted to the study of  some properties
 of  Green's functions of the Kohn- Laplace operator and their level sets. These properties lead to extend the notion of harmonic radius to
  the Heisenberg group $ \mathbb{H}^1,$ which  is the purpose of section 4.  In section 5,  we will use the  Green's functions of  Koranyi
   balls to extend the harmonic transplantation to the Heisenberg group $ \mathbb{H}^1 .$ Finally, in section 6,  we  prove our main result,
    Theorem \ref{eigenvalue}, then we give a refined version of this result which is   Theorem \ref{Harmonic center}. For the proof,
    we proceed as follows,
     we  perturb the Dirichlet problem $(P_{\Omega})$ on a Koranyi's ball, then we compare the Rayleigh
    ratios of its solutions with those of their corresponding harmonics transplantations.

\begin{flushright}
    $\square$
\end{flushright}

\textbf{ACKNOWLEDGEMENTS.}\\
 The authors would like to
thank P. Pansu for his suggestions and advices.

\section{The Heisenberg group $ \mathbb{H}^1 $ }

The Heisenberg group $ \mathbb{H}^1 $  is the Lie group whose
underlying manifold is $\mathbb{C}\times \mathbb{R}= \mathbb{R}^3$,
with coordinates
 $(x, y, t),$ the group law  $ o $  is defined as follows:
for all $ \eta = (x, y, t) $, $ \xi = (x_{0}, y_{0}, t_{0}) $ $ \in
\mathbb{R}^3 ,$  $\eta o \xi = (x+x_{0}, y+y_{0},
t+t_{0}+2(x_{0}y-xy_{0}) ) .$ For $ \eta \in \mathbb{H}^1 $, the
left translation  by $ \xi $ is the diffeomorphism $ L_{\eta}({\xi})
= \eta o \xi .$ The Heisenberg group dilations are the $
\mathbb{H}^1- $ transformations:

$$  \delta_{\lambda}(\eta) = ( \lambda x, \lambda y, \lambda^2 t),  \lambda > 0 .$$

The homogeneous norm of the space is given for all $\eta \in
\mathbb{H}^1 $ by
$$  \rho(\eta) = \left( ( x^2 + y^2 )^2 + t^2 \right)^{ \frac{1}{4} }, $$

and the natural distance is accordingly defined for all $\eta , \xi
\in \mathbb{H}^1 $ by:
$$ d_{\mathbb{H}^1}( \xi, \eta ) =  \rho({\xi}^{-1} o \eta). $$

The open balls $ B(\xi, r)$ associated to this distance are called
the Koranyi  balls
$$ B(\xi, r) = \left\{ \eta \in \mathbb{H}^1, d_{\mathbb{H}^1}( \xi, \eta ) < r \right\} .$$
 A function $ u : \Omega \rightarrow \mathbb{R} $ defined on a
domain $\Omega $  of $\mathbb{H}^1$ is called radial or $ \rho
$-radial if there exists a function $ \varphi : \mathbb{R}^{+}
\rightarrow \mathbb{R} $
  such that $ u(\xi) = \varphi(\rho(\xi)) ,$ for all $ \xi \in \Omega $.

The following left-invariant vector fields
$$ X = \frac{\partial}{\partial x} + 2y \frac{\partial}{\partial t}, \ Y = \frac{\partial}{\partial y} - 2x \frac{\partial}{\partial t}, \ T =  \frac{\partial}{\partial t} $$
give a base for the tangent space $ T_{\eta} \mathbb{H}^1 $. Whereas
The horizontal distribution $ H $ of  $ \mathbb{H}^1 $ is
 generated  by the vector fields $ X $ and $ Y .$  The
horizontal projection  of a vector $ V $ on $ H $
 will be denoted by $ V_{H} $, so a vector $ V $ is called horizontal if $ V = V_{H} $.
  For any $ C^1 $ function $ \varphi $ defined in an open set of $  \mathbb{H}^1 ,$
  the  horizontal gradient of $\varphi$ is given by

\begin{equation} \label{gradient} \nabla_{\mathbb{H}^1} \varphi = (X \varphi) X + (Y \varphi) Y. \end{equation}
We denote the  Lie bracket of two vectors fields $ V_{1} $ and $
V_{2} $ defined on  $ \mathbb{H}^1 $  by $ \left[ V_{1}, V_{2}
\right] $.\\ We have, the following  identities for the base $(X, Y,
T)$ $$ \left[ X, T\right] = \left[ Y, T \right] = 0 \,\,
\text{and}\,\,\,
   \left[X, Y\right] = -4T .$$
We consider on $  \mathbb{H}^1 ,$ the left-invariant Riemannian
metric $ g_{\mathbb{H}^1} = <.,.> $ given in $\cite{27}$

$$  g_{\mathbb{H}^1} = dx^2 + dy^2 + ( dt - 2ydx + 2xdy )^2  $$

for which the triplet  $ ( X, Y, T ) $ is an orthonormal basis for
all point in $  \mathbb{H}^1$. The corresponding Levi Civita
connexion  denoted by $ D $ satisfies

$$  \begin{array}{lll}

 D_{X}X = 0, & D_{Y}Y = 0, & D_{T}T = 0 \\

 D_{X}Y = -2T, & D_{X}T = 2Y, & D_{Y}T = -2X \\

 D_{Y}X = 2T, & D_{T}X = 2Y, & D_{T}Y = -2X.

 \end{array} $$

\begin{flushright}
    $\square$
\end{flushright}

 The volume of a domain $ \Omega \subset \mathbb{H}^1 $ denoted by $ \left| \Omega \right| $, is the Riemannian volume associated to the
 the left invariant Riemannian metric $ g_{\mathbb{H}^1} $, which coincides with the Lebesgue measure in $  \mathbb{R}^3 $.\\
   For  a $ C^1 $ surface $ \Sigma $ in $ \mathbb{H}^1 $ with  $ N $ as unit vector normal to $ \Sigma ,$ we define its area by

 \begin{equation} \label{area} A(\Sigma) = \int_{\Sigma} \left| N_{ \mathbb{H}^1 } \right| d\Sigma, \end{equation}

 where $ N_{H} = N - < N, T> T $ denotes the orthogonal projection of $ N $ onto the horizontal distribution, and $ d\Sigma $ is
 the Riemannian area element induced by the metric $ g_{\mathbb{H}^1} $ on $ \Sigma $. If $ \Sigma $ is a $ C^1 $ surface of a bounded domain
  $ \Omega $, so $  A(\Sigma) $ coincides with $ P_{\mathbb{H}^1} ( \Omega ) $ the horizontal perimeter of $ \Omega $
  is defined by

 \begin{equation} \label{ph1} P_{\mathbb{H}^1} ( \Omega ) = \sup \left\{ \int_{ \Omega } div V dv ; \ V \mbox{ horizontal of class} C^1, \ \left| V \right| \leq 1 \right\}, \end{equation}

 where $ div V $ is the Riemannian divergence of the vector field $ V $, and $ dv $ is the element of volume associated to $ g_{\mathbb{H}^1} $.
  For more details one can see $ \cite{14,15,23} .$\begin{flushright}
    $\square$
\end{flushright}

 \begin{Remarque}

 In the rest of the paper, for  a given domain $ D \subset \mathbb{H}^1 $ with boundary
 $ \partial D $, $ d\Sigma_{D} $ will  denote the Riemannian element area on $ \partial D $
 induced by the Riemannian metric $ g_{\mathbb{H}^1}$.

 \end{Remarque}

 We denotes by $ \Delta_{\mathbb{H}^1} $ the Kohn-Laplace  operator on the Heisenberg group,
 which is  defined by

 $$ \Delta_{\mathbb{H}^1} = X^2 + Y^2. $$

 The Kohn-Laplace operator $ \Delta_{\mathbb{H}^1} $ satisfies the  H\"{o}rmander conditions,
 thus it is  hypoelliptic on $ \mathbb{H}^1 $.  Particularly, any harmonic function $ u $ defined on an
 open set $ \Omega $ of $\mathbb{H}^1 $  ($ \Delta_{\mathbb{H}^1} u = 0,\, \text{in} \, \Omega )$ is $ C^{\infty}.$

A fundamental solution of $-\Delta_{\mathbb{H}^1}$  with pole at
zero is given by $\displaystyle{\Gamma(\xi)= \frac{1}{8\pi
\rho(\xi)^{2}}}$. Moreover, a fundamental solution with pole at
$\xi$, is $\displaystyle{
\Gamma(\xi,\xi^{'})= \frac{1}{8\pi d(\xi,\xi^{'})^{2}}}$. \\
A basic role in the functional analysis on the Heisenberg group is
played by the following Sobolev-type inequality:
\begin{eqnarray*}
| \varphi |^{2}_{4} \leq c | \nabla_{\mathbb{H}^{1}} \varphi
|^{2}_{2},\,\, \forall \varphi \in
\mathcal{C}_{0}^{\infty}(\mathbb{H}^{1})
\end{eqnarray*}
 This inequality ensures
in particular that for every domain $\Omega$ of $\mathbb{H}^{1}$, $
\displaystyle{ | \varphi | = |\nabla_{\mathbb{H}^{1}} \varphi |_{2}}
$ is a norm on $\mathcal{C}_{0}^{\infty}(\Omega)$. We shall denote
by $S^{1,2}(\Omega)$ the Banach space of the functions $u \in
L^{2}(\Omega)$
  such that the distributional derivative $X_{1}u,\;X_{2}u \in L^{2}(\Omega),$
 and by  $S^{1,2}_{0}(\Omega)$  the closure of
$\mathcal{C}_{0}^{\infty}(\Omega)$ with respect to the norm above.
 The space $S^{1,2}_{0}(\Omega)$ is a Hilbert space with the inner
product $ <u, v
>_{S^{1,2}_{0}(\Omega)}=\int_{\Omega}<\nabla_{\mathbb{H}^{1}}
u,\nabla_{\mathbb{H}^{1}} v > .$ \\
In what follows, we let
$ 0 < \lambda_{1} \leq \lambda_{2} \leq ... \leq \lambda_{m} \leq ...\longrightarrow + \infty $ denote the successive eigenvalues for $(P_{\Omega})$ with corresponding eigenfunctions:  $u_{1}, u_{2}, . . . , u_{m}, ...  $ in $ S^{1,2}(\Omega)$.\\
 The Kohn-Laplace  operator is related to the following result called the maximum principle
 \begin{theorem}{$\cite{30}$} \label{maximumprinciple}
 Let $ \Omega $ be an open bounded  set of $ \mathbb{H}^1 $ and $ u \in C^2 \cap C(\overline{\Omega}) $. If $ \Delta_{\mathbb{H}^1} u\geq 0, $
 then
 $$ \sup_{\overline{\Omega}} u = \sup_{\partial \Omega} u, $$
 and if $ \Delta_{\mathbb{H}^1} u \leq 0,$
 $$ \inf_{\overline{\Omega}} u = \inf_{\partial \Omega} u. $$
 \end{theorem}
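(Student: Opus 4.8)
The plan is to establish the subsolution statement ($\Delta_{\mathbb{H}^1}u\ge 0\Rightarrow \sup_{\overline\Omega}u=\sup_{\partial\Omega}u$) and to deduce the supersolution statement by replacing $u$ with $-u$; since $\Delta_{\mathbb{H}^1}$ carries no zeroth-order term, this symmetry is exact. The starting point is to read $\Delta_{\mathbb{H}^1}=X^2+Y^2$ as a degenerate-elliptic second-order operator in the Euclidean coordinates $(x,y,t)$. Expanding $X=\partial_x+2y\partial_t$ and $Y=\partial_y-2x\partial_t$ gives
\[
\Delta_{\mathbb{H}^1}u=\partial_{xx}u+\partial_{yy}u+4y\,\partial_{xt}u-4x\,\partial_{yt}u+4(x^2+y^2)\,\partial_{tt}u=\mathrm{tr}\big(A(\xi)\,D^2u\big),
\]
where $D^2u$ is the ordinary Hessian and the principal symbol matrix is $A=M^{\top}M$, the rows of $M$ being the coefficient vectors $(1,0,2y)$ and $(0,1,-2x)$ of $X$ and $Y$. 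In particular $A\succeq 0$, since $\zeta^{\top}A\zeta=(\zeta_1+2y\zeta_3)^2+(\zeta_2-2x\zeta_3)^2$, and there are no first-order terms because $X$ and $Y$ are divergence-free.

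First I would treat the strict case: if $v\in C^2(\Omega)\cap C(\overline\Omega)$ satisfies $\Delta_{\mathbb{H}^1}v>0$ in $\Omega$, then $v$ cannot attain its maximum at an interior point. Indeed, at an interior maximum $\xi_0$ the ordinary gradient vanishes and $D^2v(\xi_0)\preceq 0$; since $A(\xi_0)\succeq 0$, the elementary trace inequality $\mathrm{tr}(AH)\le 0$ valid for $A\succeq 0$ and $H\preceq 0$ forces $\Delta_{\mathbb{H}^1}v(\xi_0)=\mathrm{tr}\big(A(\xi_0)D^2v(\xi_0)\big)\le 0$, contradicting strict positivity. Hence $\max_{\overline\Omega}v=\max_{\partial\Omega}v$ for every strict subsolution.

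Next I would remove the strictness by a perturbation. The auxiliary function $w(x,y,t)=x^2$ satisfies $\Delta_{\mathbb{H}^1}w=X^2(x^2)+Y^2(x^2)=2>0$ everywhere and is bounded on the bounded set $\overline\Omega$ (one may equally take $w=x^2+y^2$, for which $\Delta_{\mathbb{H}^1}w=4$). Given $u$ with $\Delta_{\mathbb{H}^1}u\ge 0$, for each $\varepsilon>0$ the function $u_\varepsilon=u+\varepsilon w$ obeys $\Delta_{\mathbb{H}^1}u_\varepsilon\ge 2\varepsilon>0$, so the strict case yields $\sup_{\overline\Omega}u_\varepsilon=\sup_{\partial\Omega}u_\varepsilon$. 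Letting $\varepsilon\to 0$, together with $0\le w\le C$ on $\overline\Omega$, gives $\sup_{\overline\Omega}u=\sup_{\partial\Omega}u$. Applying this conclusion to $-u$ produces the infimum statement for supersolutions.

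The only genuine subtlety, and the main obstacle, is the degeneracy of $A$: unlike the uniformly elliptic case, $A$ is merely positive semidefinite (it annihilates the vertical direction $T$), so the interior-maximum step must be carried out through the trace pairing $\mathrm{tr}(AH)$ rather than through strict negativity of any single pure second derivative. The hypotheses $u\in C^2$ and the absence of a zeroth-order term are precisely what make this pairing argument and the clean conclusion $\sup_{\overline\Omega}u=\sup_{\partial\Omega}u$ available; notably, no H\"ormander-type hypoellipticity is required for this weak (as opposed to strong) maximum principle.
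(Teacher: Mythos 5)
Your argument is correct. Note that the paper itself offers no proof of this statement: it is quoted from Bony's work \cite{30} on the maximum principle for degenerate elliptic operators, so there is nothing internal to compare against. Your self-contained route --- rewriting $X^2+Y^2$ as $\mathrm{tr}\big(A(\xi)D^2u\big)$ with $A=M^{\top}M\succeq 0$ and no first-order part, ruling out an interior maximum for strict subsolutions via $\mathrm{tr}(AH)\le 0$ for $A\succeq0$, $H\preceq0$, and then removing strictness with the perturbation $u+\varepsilon x^2$ on the bounded set $\overline{\Omega}$ --- is the standard weak maximum principle argument for degenerate elliptic operators without zeroth-order term, and it correctly isolates that only positive semidefiniteness of the symbol (not H\"ormander's condition) is needed for the weak, as opposed to strong, statement. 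One small correction: the absence of first-order terms in $X^2+Y^2$ is not a consequence of $X$ and $Y$ being divergence-free, but of the fact that each field annihilates its own coefficient functions ($X(2y)=0$ because $2y$ is independent of $x$ and $t$, and $Y(-2x)=0$ likewise); your explicit expansion already verifies this, so the proof is unaffected. You could also shorten the interior-maximum step: at an interior maximum $X^2u\le 0$ and $Y^2u\le 0$ separately, since each is the second derivative of $u$ restricted to the corresponding integral curve, which bypasses the trace inequality entirely.
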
\begin{flushright}
    $\square$
\end{flushright}
 We will end this section by giving the volumes of  Koranyi balls.
  \begin{proposition}
 For $ R > 0 $, the volume of the Koranyi's ball $B(0, R)$ is given
 by
 \begin{equation} \label{ballvolume} \left| B(0, R) \right| = \frac{\pi^2}{2} R^4. \end{equation}\begin{flushright}
    $\square$
\end{flushright}

 \end{proposition}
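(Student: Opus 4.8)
The plan is to compute the volume directly as a Lebesgue integral over $\mathbb{R}^3$, since the paper records that the Riemannian volume associated to $g_{\mathbb{H}^1}$ coincides with the Lebesgue measure. By the definition of the homogeneous norm $\rho$, the Koranyi ball $B(0,R)$ is exactly the set of points $(x,y,t)$ satisfying $(x^2+y^2)^2 + t^2 < R^4$, so the first step is simply to write
$$ \left| B(0,R) \right| = \int\!\!\int\!\!\int_{(x^2+y^2)^2 + t^2 < R^4} dx\, dy\, dt. $$

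Before integrating, I would exploit the dilation structure to isolate the dependence on $R$. The Heisenberg dilation $\delta_R(x,y,t) = (Rx, Ry, R^2 t)$ satisfies $\rho(\delta_R \eta) = R\,\rho(\eta)$, hence $B(0,R) = \delta_R\bigl(B(0,1)\bigr)$. Since the Jacobian determinant of $\delta_R$ is $R\cdot R\cdot R^2 = R^4$, the change of variables formula yields $\left| B(0,R) \right| = R^4 \left| B(0,1) \right|$. This reduces the whole statement to the single computation $\left| B(0,1) \right| = \frac{\pi^2}{2}$, and the expected scaling in $R^4$ then comes for free.

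To evaluate $\left| B(0,1) \right|$, I would pass to polar coordinates $x = r\cos\theta$, $y = r\sin\theta$ in the horizontal plane, so that $dx\, dy = r\, dr\, d\theta$ and the defining inequality becomes $r^4 + t^2 < 1$. For each admissible $r \in [0,1)$ the variable $t$ ranges over an interval of length $2\sqrt{1 - r^4}$, and integrating out $\theta \in [0, 2\pi)$ gives
$$ \left| B(0,1) \right| = 4\pi \int_0^1 r\,\sqrt{1 - r^4}\, dr. $$
The substitution $u = r^2$ turns this into $2\pi \int_0^1 \sqrt{1 - u^2}\, du$, and recognizing the last integral as one quarter of the area of the unit disc, namely $\frac{\pi}{4}$, gives $2\pi \cdot \frac{\pi}{4} = \frac{\pi^2}{2}$. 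Combined with the scaling identity this establishes $\left| B(0,R) \right| = \frac{\pi^2}{2}\,R^4$.

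There is no genuine obstacle here: the argument is a routine change of variables together with an elementary one-dimensional integral. The only points deserving any care are verifying that the Jacobian of the anisotropic dilation $\delta_R$ equals $R^4$ (the $t$-coordinate scales by $R^2$, not $R$, which is precisely what produces the homogeneous dimension $4$ rather than $3$), and correctly identifying $\int_0^1 \sqrt{1 - u^2}\, du = \frac{\pi}{4}$.
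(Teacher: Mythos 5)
Your computation is correct, and it reaches the same elementary fact by a slightly different route than the paper. The paper performs a single three-dimensional change of variables into gauge-adapted "spherical" coordinates $\phi(r,\theta,\varphi)=(r\sqrt{\sin\theta}\cos\varphi,\, r\sqrt{\sin\theta}\sin\varphi,\, r^{2}\cos\theta)$, for which $\rho(\phi(r,\theta,\varphi))=r$ and $\det J_{\phi}=r^{3}$, so the volume is obtained in one stroke as $\int r^{3}\,dr\,d\theta\,d\varphi$ over the coordinate box (note the paper writes the radial range as $]0,1[$ while asserting the result for radius $R$; the intended range is clearly $]0,R[$, since $\int_{0}^{R}r^{3}dr\cdot\pi\cdot 2\pi=\frac{\pi^{2}}{2}R^{4}$). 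You instead factor the problem: first the anisotropic dilation $\delta_{R}$ with Jacobian $R^{4}$ reduces everything to the unit ball, then Fubini in $t$ and planar polar coordinates reduce the unit-ball volume to $2\pi\int_{0}^{1}\sqrt{1-u^{2}}\,du=\frac{\pi^{2}}{2}$. Your version makes the homogeneity (the appearance of the homogeneous dimension $4$) explicit and structural rather than an artifact of the Jacobian, at the cost of an extra step; the paper's version is more compact and exhibits a coordinate system that is reusable for other $\rho$-radial integrals (indeed the same $r^{3}$ density is implicitly used later when the paper computes $\int_{\{\rho=r\}}\lvert\nabla\rho\rvert^{-1}d\Sigma=2\pi^{2}r^{3}$). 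Both are complete and correct.
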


 \begin{proof}

We consider the function

 $$ \begin{array}{llll}

 \phi & ]0, 1[ \times ]0, \pi[ \times ]0, 2\pi[ & \longrightarrow & B(0, R) \backslash \textit{P} \\

      & (r, \theta, \varphi) & \longmapsto & ( r \sqrt{sin\theta} cos\varphi, r \sqrt{sin\theta} sin\varphi, r^2 cos \theta),
      \end{array} $$

 where

  $$ \textit{P} = \left\{ (x, y, t) \in \mathbb{H}^1, y = 0, x \geq 0 \right\}. $$

  So $ \phi $ is a $ C^1 $-diffeomorphism and $ \forall (r, \theta, \varphi) \in ]0, 1[ \times ]0, \pi[ \times ]0, 2\pi[ ,$ we have

  $$ det\left(J_{\phi}(r, \theta, \varphi)\right) = r^3. $$

  We observe that the Lebesgue measure of the set $ \textit{P} $ is zero, so we conclude that

  $$ \left| B(0, R) \right| = \int_{0}^{1} \int_{0}^{\pi} \int_{0}^{2\pi}
   \left| det\left(J_{\phi}(r, \theta, \varphi)\right) \right| dr d\theta d\varphi =
    \frac{\pi^2}{2} R^4. $$

  \end{proof}

\section{  Green's function}

Before giving the different properties of the Green's function, we
  will introduce the notion of uniform exterior ball property.

  \begin{definition}

We say that the domain $\Omega$ of $\mathbb{H}^{1}$ satisfies the
uniform exterior ball property, if the following condition holds
\begin{eqnarray}\label{boulexter} \left\{\begin{array}{l}
There\;exists\;r_{0}>0 \; such \;that\;
\\ \forall \;\xi\; \in\;\partial\;\Omega ,\;\;\forall\; r\;\in\;]0, r_{0}]
\;\exists\; \eta\;\in\;\mathbb{H}^{n}\; such\; that\;\\
B_{d}(\eta, r)\cap\Omega = \emptyset\;\; and \;\;\xi\in\partial
B_{d}(\eta, r).
\end{array}
\right.
\end{eqnarray}

\end{definition}
\begin{flushright}
    $\square$
\end{flushright}

We consider an open set $\Omega$ of the Heisenberg group $
\mathbb{H}^{1}$ with smooth boundary, such that $\Omega$ and
$\mathbb{H}^{n}\setminus\overline{\Omega}$ satisfy
 the uniform exterior ball property. This condition seems to be natural since the
Koranyi balls of $\mathbb{H}^{1}$ satisfy such a property. In
\cite{31}, Hansen and Hueber proved that this condition implies that
the domain $\Omega$  is then regular: for $\varphi$ in
$C(\partial\Omega)$, the Dirichlet problem
$$ \left\{ \begin{array}{lllll}
 \Delta_{\mathbb{H}^1} u & = & 0 & \mbox{in} & \Omega  \\
 u & = & \varphi & \mbox{on} & \partial \Omega.
 \end{array} \right. $$
has a classical solution $u \in C^2(\Omega)\cap
C(\overline{\Omega})$. In particular, $\Omega $ has a Green's
function for every $\xi \in \Omega$, which we denote by
$G_{\left\{\Omega; \xi \right\}}.$ More precisely, for every $\xi
\in \Omega,$ there exists a classical solution $H_{\left\{\Omega;
\xi \right\}}$ of the Dirichlet problem
\begin{eqnarray}\label{DericG}
\left\{
\begin{array}{lllll}
\Delta_{\mathbb{H}^{1}} H_{\left\{\Omega; \xi \right\}} &=&0 &  in\;\;  \Omega\\
H_{\left\{\Omega; \xi \right\}}&=& \Gamma_{\xi} &  on\;\;
\partial\Omega,
\end{array}\right.
\end{eqnarray}
where $\Gamma_\xi$ is a fundamental solution of
$-\Delta_{\mathbb{H}^{1}}$ with pole at $\xi.$ Hence, the Green's
function is defined by
 \begin{equation} \label{greenfunction} G_{\left\{\Omega; \xi \right\}}(\eta) = \Gamma_{\xi}(\eta) - H_{\left\{\Omega; \xi \right\}}(\eta) \end{equation}
 with
\begin{equation} \label{gama}  \Gamma_{\xi}(\eta) = \frac{1}{8\pi \rho^2 ( {\xi}^{-1} o \eta ) }. \end{equation}
  The functions $\Gamma_{\xi}$ and   $ H_{\left\{\Omega; \xi
\right\}} $ are called respectively  the singular part and the
regular part of the  Green's function $ G_{\left\{\Omega; \xi
\right\}}.$\begin{flushright}
    $\square$
\end{flushright}

 \subsection{Examples}
Finding Green's functions for given domains is difficult, however
for certain domains with nice geometries, it is possible.  We will
give two examples below.
 \begin{example} The Green's function of the Koranyi ball $ B(\xi, R) $ with pole at $\xi$
 is given by

 $$ G_{\left\{B(\xi, R); \xi \right\}}(\eta) = \frac{1}{8\pi \rho^2 ( {\xi}^{-1} o \eta ) } - \frac{1}{8\pi R^2 }, $$

 indeed, we have

 $$ \left\{ \begin{array}{lllll}
 \displaystyle \Delta_{\mathbb{H}^1} \left( \frac{1}{8\pi R^2 }\right)  & = & 0 & \mbox{in} & B(\xi, R)  \\
 \displaystyle \frac{1}{8\pi R^2 } & = & \displaystyle \frac{1}{8\pi \rho^2 ( {\xi}^{-1} o \eta ) } & \mbox{on} & \partial B(\xi, R) .
 \end{array} \right. $$

 \end{example}\begin{flushright}
    $\square$
\end{flushright}

 \begin{example} The Green's function of the open set $D= \lbrace \eta(x, y, t) \in \mathbb{H}^1, t>0 \rbrace $, with pole at
  $ \xi = (0, 0, t_{0}) ,$ recall that the boundary of $D$ is  $\partial D=\partial \lbrace \eta(x, y, t) \in \mathbb{H}^1, t=0
  \rbrace.$ We have

 $$    G_{D,\xi}(\eta) = \frac{1}{8\pi \rho^2 ( {\xi}^{-1} o \eta ) } - \frac{1}{8\pi \rho^2 ( {\xi} o \eta ) }, $$

 in fact

 $$ \left\{ \begin{array}{lllll}
 \displaystyle \Delta_{\mathbb{H}^1} \left( \frac{1}{8\pi \rho^2 ( {\xi} o \eta ) }\right)   & = & 0 & \mbox{in} & D \\
 \displaystyle \frac{1}{8\pi \rho^2 ( {\xi} o \eta ) } & = & \displaystyle \frac{1}{8\pi \rho^2 ( {\xi}^{-1} o \eta ) } & \mbox{on} &
 \partial D
 \end{array} \right. $$\begin{flushright}
    $\square$
\end{flushright}

 \end{example}

 \subsection{Properties of the  Green's function}

 Let $ \Omega $ be a regular domain of $ \mathbb{H}^1 $ and $ G_{\xi} $  the Green's function
  associated to $ \Omega $  with pole at the point $\xi.$

 \begin{property}

 The function $ G_{\xi} $ is positive on $ \Omega $.

 \begin{proof}

 We have $ \displaystyle \lim_{\eta \rightarrow \xi} G_{\xi}(\eta) = +\infty $, so $ \forall M > 0 $, there exists $ r > 0 $ such that

  \begin{equation} \label{positive1} G_{\xi}(\eta) > M > 0,\, \forall \eta \in \overline{B(\xi, r)} \end{equation}.

 Let  $ D = \Omega \backslash \overline{B(\xi, r)}, $  the function  $ G_{\xi} $ is harmonic on $ D $. As $ G_{\xi} $ vanishes on
  $ \partial \Omega $ and is strictly positive on $ \partial \overline{B(\xi, r)}$, thus by using the maximum principle given in
   Theorem \ref{maximumprinciple}, we obtain

 \begin{equation} \label{positive2} G_{\xi} \geq 0  \ \mbox{on} \ D. \end{equation}

 Through ($\ref{positive1}$) and ($\ref{positive2}$), we conclude that $  G_{\xi} \geq 0 $ on $ \Omega $.

 \end{proof}

 \end{property}

 \begin{property}

 The Green's function is symmetric, it is a consequence of the fact that the Kohn-Laplace operator is a self
 adjoint operator.

 \end{property}\begin{flushright}
    $\square$
\end{flushright}

 \begin{property}

 If $ \Omega_{1} \subset \Omega_{2} $, we have  for all $\xi \in \Omega_{1} $

 $$ \displaystyle G_{\left\{\Omega_{1}; \xi \right\}} \leq \, G_{\left\{\Omega_{2}; \xi
 \right\}}\,
 \text{on}\,\,\, \Omega_{1}  $$

 \end{property}\begin{flushright}
    $\square$
\end{flushright}

 \begin{proof}

  We consider for $ \xi \in \Omega_{1} , $ the function defined on $\Omega_{1} $

 $$ \displaystyle \psi_{\xi} = G_{\lbrace \Omega_{2}, \xi \rbrace } - G_{\lbrace \Omega_{1},
 \xi \rbrace } .$$

 so

 $$ \displaystyle \psi_{\xi} = H_{\lbrace \Omega_{1}, \xi \rbrace} - H_{\lbrace \Omega_{2}, \xi \rbrace}, $$

 thus, $ \psi_{\xi} $ is harmonic on  $ \Omega_{1} $. On the other hand,  as $ G_{\lbrace \Omega_{2}, \xi \rbrace } $ is positive on $ \Omega_{2} $, then  $ \forall \eta \in \partial \Omega_{1} $,

 $$ \displaystyle \psi_{\xi} = G_{\lbrace \Omega_{2}, \xi \rbrace } \geq 0, $$

 by using the maximum principle,  we have $ \forall \eta \in \Omega_{1} $

 $$ \displaystyle  \psi_{\xi} \geq 0. $$

 Therefore

 \begin{equation} \label{greenmonotone} G_{\lbrace \Omega_{2}, \xi \rbrace } \geq G_{\lbrace \Omega_{1}, \xi \rbrace }. \end{equation}

 \end{proof}

  For any real $ c ,$ we denote by $\Omega(c)= \{ \eta \in \Omega,\, G_{\xi}( \eta) > c \} ,$ the level set of the Green's function
  on $\Omega$  and by  $\partial \Omega(c)= \{ \eta \in \Omega,\, G_{\xi}( \eta) = c \}$ its
  boundary.

 \begin{theorem}

  \label{greentheorem}  Let $ G_{\xi} $ be the Green's function of $ \Omega $ with pole at $ \xi $. For almost every
  $\tau,$  it holds

  $$ \int_{\partial \Omega(\tau)} \frac{\left| \nabla_{\mathbb{H}^1}G_{\xi} \right|^2}{\left| \nabla G_{\xi} \right|} d \Sigma_{\Omega(\tau)} = 1. $$\begin{flushright}
    $\square$
\end{flushright}

  \end{theorem}

  \begin{proof}

  Let $ D = \Omega(\tau) \backslash \overline{B(\xi, \epsilon )}$. So $ G_{\xi} = \Gamma_{\xi} - H_{\xi} $ is harmonic on $ D $ and we have

  $$ \int_{D} \Delta_{\mathbb{H}^1} G_{\xi} dv = 0 . $$

  An integration by parts gives

  $$ \begin{array}{lll}
  \displaystyle\int_{D} \Delta_{\mathbb{H}^1} G_{\xi} dv & = & \displaystyle\int_{\partial \Omega(\tau)} \left\langle \nabla_{\mathbb{H}^1} G_{\xi}, N_{1} \right\rangle d\Sigma_{\Omega(\tau)} - \disp\int_{ \partial B(\xi, \epsilon ) } \left\langle \nabla_{\mathbb{H}^1} G_{\xi}, N_{2} \right\rangle d\Sigma_{B(\xi, \epsilon )} \\

  & = & 0 ,
  \end{array} $$

  where

  $$ N_{1} = -\frac{\nabla G_{\xi}}{ \left| \nabla G_{\xi} \right| } $$

  and

  $$ N_{2} = -\frac{ \nabla \rho( \xi^{-1} o \eta ) }{ \left| \nabla \rho( \xi^{-1} o \eta ) \right| }. $$

  Thus

  $$ \int_{ \partial \Omega(\tau) } \left\langle \nabla_{\mathbb{H}^1} G_{\xi}, N_{1} \right\rangle d\Sigma_{\Omega(\tau)} = \int_{ \partial B(\xi, \epsilon ) } \left\langle \nabla_{\mathbb{H}^1} G_{\xi}, N_{2} \right\rangle d\Sigma_{B(\xi, \epsilon )}. $$

  On one hand

  $$ \int_{ \partial B(\xi, \epsilon ) } \left\langle \nabla_{\mathbb{H}^1} G_{\xi}, N_{2} \right\rangle d\Sigma_{B(\xi, \epsilon )} = \int_{ \partial B(\xi, \epsilon ) } \left\langle \nabla_{\mathbb{H}^1} \Gamma_{\xi}, N_{2} \right\rangle d\Sigma_{B(\xi, \epsilon )} - \int_{ \partial B(\xi, \epsilon ) } \left\langle \nabla_{\mathbb{H}^1} H_{\xi}, N_{2} \right\rangle d\Sigma_{B(\xi, \epsilon )} , $$

  then, by proceeding with  an integration by parts in the second
  integral of the equality above, we obtain

  $$ \int_{ \partial B(\xi, \epsilon ) } \left\langle \nabla_{\mathbb{H}^1} H_{\xi}, N_{2} \right\rangle d\Sigma_{B(\xi, \epsilon )} =
  \int_{B(\xi, \epsilon )} \Delta_{\mathbb{H}^1} H_{\xi} dv = 0.$$

 On the other hand

  $$ \int_{ \partial B(\xi, \epsilon ) } \left\langle \nabla_{\mathbb{H}^1} \Gamma_{\xi}, N_{2} \right\rangle d\Sigma_{B(\xi, \epsilon )} = -1, $$

 therefore

  $$ \int_{\partial \Omega(\tau)} \left\langle \nabla_{\mathbb{H}^1} G_{\xi}, N_{1} \right\rangle d\Sigma_{\Omega(\tau)} = -1. $$

  The result follows.

  \end{proof}

 \section{ The harmonic radius in the Heisenberg group }

 \subsection{ Definition and examples }

 The harmonic radius of a regular domain $ \Omega \subset \mathbb{H}^1 $ at a fixed point $ \xi $, denoted by $ r_{\Omega}(\xi) $ is  defined by

 $$ \Gamma (r_{\Omega}(\xi)) = H_{\left\{\Omega; \xi \right\}}(\xi)$$

where

 $$ \Gamma(r) = \displaystyle \frac{1}{8\pi r^2} $$

 and $ H_{\left\{\Omega; \xi \right\}}$ is the regular part of the Green's function $ G_{\left\{\Omega; \xi \right\}}$. \\
Using the expression of the function $ \Gamma,$ we obtain
$$r_{\Omega}(\xi)=\frac{1}{\sqrt{8\pi H_{\left\{\Omega; \xi \right\}}(\xi) }} $$\begin{flushright}
    $\square$
\end{flushright}

 \begin{Remarque}

 As $ H_{\left\{\Omega; \xi \right\}} $ is strictly positive, the harmonic radius is well defined.

 \end{Remarque}

The difficulty to explicit the harmonic radius at all points of any
domain  $\Omega $ is due to the difficulty to explicit Green's
functions for such domain as we noticed it earlier. So as examples,
we will come back to the ones given in the last section.

 \begin{example} \label{exampleharmonicradius}

  Let $ \Omega $ be the Koranyi's ball $ B(\xi, R) $. The associated Green's function with
  pole at the center $\xi$ of the ball, is

 $$ G_{\left\{\Omega; \xi \right\}}(\eta) = \frac{1}{8\pi \rho^2 ( {\xi}^{-1} o \eta ) } - \frac{1}{8\pi R^2 }, $$

 so for all $\eta \in B(\xi, R) ,$  $ H_{\left\{\Omega; \xi \right\}}(\eta) = \displaystyle \frac{1}{8\pi R^2 }. $

 Hence, we deduce that the harmonic radius at the center of the ball is
 equal to its radius

 $$ r_{\Omega}(\xi) = R. $$\begin{flushright}
    $\square$
\end{flushright}

 \end{example}

 \begin{example}

 Let $ D = \lbrace \eta(x, y, t) \in \mathbb{H}^1,\, t>0 \rbrace $. For $ \xi = (0, 0, t_{0}) ,$ we have

 $$   G_{\left\{\Omega; \xi \right\}}(\eta) = \frac{1}{8\pi \rho^2 ( {\xi}^{-1} o \eta ) } - \frac{1}{8\pi \rho^2 ( {\xi} o \eta ) }, $$

 so

 $$ H_{\left\{\Omega; \xi \right\}}(\eta) = \frac{1}{8\pi \rho^2 ( {\xi} o \eta ) }, $$

 and

 $$ H_{\left\{\Omega; \xi \right\}}(\xi) = \frac{1}{8\pi \rho^2 ( {\xi} o \xi ) } = \frac{1}{16 \pi t_{0}}, $$

 then

 $$ r_{\Omega}(\xi) = \sqrt{2t_{0}}. $$\begin{flushright}
    $\square$
\end{flushright}

 \end{example}

 \subsection{Properties of The harmonic radius }

 In the sequel and for the sake of simplicity, the functions $ G_{\left\{\Omega; \xi \right\}} $ and
  $  H_{\left\{\Omega; \xi \right\}} $ will be respectively denoted by $ G_{\xi} $ and $ H_{\xi}.$

 \begin{property}
  For any regular domain  $ \Omega $  of $ \mathbb{H}^1 $  and any  $ \xi \in \Omega $, we have

  \begin{equation} \label{infrsup} \inf_{\eta \in \partial\Omega} \rho ( {\xi}^{-1} o \eta ) \leq r_{\Omega}(\xi) \leq \sup_{\eta \in \partial \Omega} \rho ( {\xi}^{-1} o \eta ) \end{equation}

 \end{property}\begin{flushright}
    $\square$
\end{flushright}

\begin{proof}.
 Since $ H_{\xi} $ is harmonic on $ \Omega $, so by using the maximum principle for the Kohn-Laplace operator, we obtain

 \begin{equation} \label{infH} \inf_{\eta \in \Omega} H_{\xi}(\eta) = \inf_{\eta \in \partial \Omega} H_{\xi}(\eta),\end{equation}

 and

 \begin{equation}  \label{supH} \sup_{\eta \in \Omega} H_{\xi}(\eta) = \sup_{\eta \in \partial \Omega} H_{\xi}(\eta). \end{equation}

 On the other hand, $ \forall \eta \in \partial \Omega $

 \begin{equation} \label{H} H_{\xi}(\eta) = \frac{1}{8\pi \rho^2 ( {\xi}^{-1} o \eta ) }, \end{equation}

 thus, by using ($ \ref{infH} $), ($ \ref{supH} $) and ($ \ref{H} $), we obtain

 \begin{equation}\label{infHsup}  \inf_{\eta \in \partial \Omega} \frac{1}{8\pi \rho^2 ( {\xi}^{-1} o \eta ) }  \leq H_{\xi}(\eta) \leq \sup_{\eta \in \partial \Omega} \frac{1}{8\pi \rho^2 ( {\xi}^{-1} o \eta ) } \end{equation}

The result follows using the definition of the harmonic radius.

 \end{proof}

 \begin{property}

 The harmonic radius is monotone, it means that, if $ \Omega_{1} \subset \,\Omega_{2} ,$ then for all $\xi $ in $\Omega_{1}$

 $$ r_{\Omega_{1}}(\xi) \leq \, r_{\Omega_{2}}(\xi) $$\begin{flushright}
    $\square$
\end{flushright}

 \end{property}

 \begin{proof}

 We consider the function

 $$ \psi_{\xi} = G_{\lbrace \Omega_{2}, \xi \rbrace } - G_{\lbrace \Omega_{1}, \xi \rbrace } $$

 we have

 $$ \psi_{\xi} = H_{\lbrace \Omega_{1}, \xi \rbrace} - H_{\lbrace \Omega_{2}, \xi \rbrace}. $$

 As $ \psi_{\xi} $ is positive,  we obtain

 $$ H_{\lbrace \Omega_{1}, \xi \rbrace}(\xi) \geq H_{\lbrace \Omega_{2}, \xi \rbrace}(\xi).$$

 The result follows.

 \end{proof}

  \begin{property} \textbf{Boundary Behavior}\\
Since the regular part of the Green's function coincides with its
singular part on the boundary of the domain, the harmonic radius
vanishes at every point of the boundary. More precisely, Let $\xi,
\eta \in \partial\Omega,$ then $$\lim_{\eta\rightarrow \xi}
H_{\xi}(\eta)= \lim_{\eta\rightarrow \xi}
\Gamma_{\xi}(\eta)=\infty.$$ Hence $r_{\Omega}(\xi)=0.$\\
Furthermore, we have the following expansion of the Harmonic radius
near the boundary of domains $\Omega$ such that $\Omega$ and
$\mathbb{H}^{1}\setminus \overline{\Omega}$ satisfy
$(\ref{boulexter}).$ For a point $\xi \in \Omega,$ let us denote
$d_{\xi}=d(\xi,\partial(\Omega))$ its distance  from the boundary of
$\Omega,$ then
$$H_{\xi}(\xi)= \frac{1}{(2 d_{\xi})^{2}}+ o(d_{\xi}^{-2})$$
For a proof one can see $\cite{16}.$ Hence, the harmonic radius has
the following expansion near the boundary
$$ r_{\Omega}(\xi)=2 d_{\xi}+ o(d_{\xi})$$\begin{flushright}
    $\square$
\end{flushright}

  \end{property}
In the following, we will give the relation between the harmonic
radius of $\Omega$ and those of its level sets.
 \begin{proposition}

 Let $ \Omega(\tau) = \lbrace \eta \in \Omega,\, G_{\xi}(\eta) > \tau \rbrace $ with $ \tau $  a positive real and $\xi \in \Omega(\tau).$
 We have the following relation between the harmonic radius of  $ \Omega(\tau) $ at  $\xi$ denoted by $ r_{\Omega(\tau)}(\xi) $ and the one of
  $\Omega$ at the same point

 \begin{equation} \label{rtau} r_{\Omega(\tau)}(\xi) = \frac{r_{\Omega}(\xi)}{\sqrt{ \tau 8\pi r_{\Omega}^{2}(\xi) + 1}} \end{equation}\begin{flushright}
    $\square$
\end{flushright}

 \end{proposition}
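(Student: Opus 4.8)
The plan is to identify the Green's function of the level set $\Omega(\tau)$ with a constant vertical shift of the Green's function of $\Omega$, and then read off the harmonic radius from the value of the regular part at $\xi$. First I would observe that on $\Omega(\tau)=\lbrace \eta\in\Omega:\,G_{\xi}(\eta)>\tau\rbrace$ the function $\eta\mapsto G_{\xi}(\eta)-\tau$ has exactly the three defining features of the Green's function of $\Omega(\tau)$ with pole at $\xi$: it carries the same singular part $\Gamma_{\xi}$ at $\xi$, since subtracting the constant $\tau$ does not alter the singularity; it is $\Delta_{\mathbb{H}^1}$-harmonic on $\Omega(\tau)\setminus\lbrace\xi\rbrace$, because $G_{\xi}$ is; and it vanishes on $\partial\Omega(\tau)$, precisely because $G_{\xi}\equiv\tau$ there by the definition of the level set. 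By the uniqueness of the Green's function on the regular domain $\Omega(\tau)$, this yields the shift identity $G_{\left\{\Omega(\tau);\xi\right\}}=G_{\xi}-\tau$ on $\Omega(\tau)$.

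Next I would pass to the regular parts. Writing the decomposition $G_{\left\{\Omega(\tau);\xi\right\}}=\Gamma_{\xi}-H_{\left\{\Omega(\tau);\xi\right\}}$ together with $G_{\xi}=\Gamma_{\xi}-H_{\xi}$ and subtracting, the singular parts $\Gamma_{\xi}$ cancel and I obtain $H_{\left\{\Omega(\tau);\xi\right\}}=H_{\xi}+\tau$ on $\Omega(\tau)$. Evaluating at the pole gives $H_{\left\{\Omega(\tau);\xi\right\}}(\xi)=H_{\xi}(\xi)+\tau$. I would then invoke the definition of the harmonic radius, $\Gamma(r_{\Omega}(\xi))=H_{\xi}(\xi)$ with $\Gamma(r)=\frac{1}{8\pi r^2}$, for both domains, turning this into $\frac{1}{8\pi r_{\Omega(\tau)}^2(\xi)}=\frac{1}{8\pi r_{\Omega}^2(\xi)}+\tau$. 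Solving for $r_{\Omega(\tau)}(\xi)$ gives $r_{\Omega(\tau)}^2(\xi)=\frac{r_{\Omega}^2(\xi)}{1+8\pi\tau\, r_{\Omega}^2(\xi)}$, which is exactly \eqref{rtau} after taking square roots.

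The genuinely delicate point is the first step: one must justify that $\Omega(\tau)$ is a regular domain for which the Green's function exists and is unique, so that the shift identity is legitimate. For almost every $\tau$ the boundary $\partial\Omega(\tau)$ is a smooth regular level surface of $G_{\xi}$ (by Sard's theorem, and using the regularity of the level sets of $G_{\xi}$ discussed earlier), and hence inherits the uniform exterior ball type regularity needed for the Dirichlet problem of Section~3 to be solvable on $\Omega(\tau)$; note also that $\xi\in\Omega(\tau)$ for every $\tau>0$ since $G_{\xi}(\xi)=+\infty$. Once this regularity is secured, the uniqueness of the Green's function applies and the remaining algebra is routine.
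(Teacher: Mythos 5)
Your proposal is correct and follows essentially the same route as the paper: identify $G_{\xi}-\tau$ as the Green's function of $\Omega(\tau)$ with regular part $H_{\xi}+\tau$, evaluate at the pole, and solve for the harmonic radius from $\frac{1}{8\pi r_{\Omega(\tau)}^{2}(\xi)}=\frac{1}{8\pi r_{\Omega}^{2}(\xi)}+\tau$. Your added justification of the regularity of $\Omega(\tau)$ for almost every $\tau$ is a point the paper passes over silently, and is a welcome refinement rather than a divergence.
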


 \begin{proof}

 We define on $ \Omega(\tau),$ the function

 $$ G_{\xi}^{\tau}(\eta) = G_{\xi} - \tau = \frac{1}{8\pi \rho^2 ( {\xi}^{-1} o \eta ) } - H_{\xi}(\eta) - \tau. $$

 As $ G_{\xi}^{\tau} $ vanishes on $ \partial \Omega(\tau) $ and the function $ H_{\xi}^{\tau} = H_{\xi} + \tau $ is harmonic on $ \Omega(\tau) ,$  we conclude that $ G_{\xi}^{\tau} $ is the Green's function associated to the set $ \Omega(\tau) $ and $ H_{\xi}^{\tau} = H_{\xi} + \tau $ is its regular part, thus the harmonic radius of $ \Omega(\tau) $ at $\xi$, is given by

 $$ \begin{array}{lll}

 r_{\Omega(\tau)}(\xi) & = & \displaystyle \frac{1}{ \sqrt{8\pi H_{\xi}^{\tau}(\xi)}} \\

 & = & \displaystyle \frac{1}{\sqrt{8\pi \left( H_{\xi}(\xi) + \tau \right)}} \\

 & = & \displaystyle \frac{1}{\sqrt{8\pi \left( \displaystyle \frac{1}{ 8\pi r_{\Omega}^{2}(\xi)} + \tau \right)}} \\

 & = & \displaystyle \frac{r_{\Omega}(\xi)}{\sqrt{\tau 8\pi r_{\Omega}^{2}(\xi) + 1} }.

 \end{array} $$

 \end{proof}

Next, we will give the relation between the harmonic radius of
$\Omega$ at a given point $\xi \in \Omega$ and the volume of
$\Omega.$

  \begin{theorem}

 Let $ \Omega $ be a  domain of $ \mathbb{H}^1 $ such that $\Omega$ and $\mathbb{H}^{1}\setminus \bar{\Omega}$ satisfy the uniform exterior property (\ref{boulexter})and let $\xi$ be a point of  $ \Omega.$ We have the following inequality

 \begin{equation} \label{rR}  r_{\Omega}(\xi) \leq \,\alpha_{\Omega}(\xi)\, (\frac{2|\Omega|}{\pi^{2}})^{\frac{1}{4}}, \end{equation}

 with

  $$ \alpha_{\Omega}(\xi) =    \frac{ \displaystyle \sup_{\eta \in \partial \Omega} d_{\mathbb{H}^1}(\xi, \eta)}{\displaystyle \inf_{ \eta \in \partial \Omega} d_{\mathbb{H}^1}(\xi, \eta)} $$\begin{flushright}
    $\square$
\end{flushright}

 \end{theorem}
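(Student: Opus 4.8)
The plan is to sandwich the harmonic radius between the outer and inner radii of $\Omega$ as seen from $\xi$, and then to convert the inner radius into a volume bound by inscribing a Koranyi ball. Write $R_{\sup}=\sup_{\eta\in\partial\Omega}d_{\mathbb{H}^1}(\xi,\eta)$ and $R_{\inf}=\inf_{\eta\in\partial\Omega}d_{\mathbb{H}^1}(\xi,\eta)$, so that $\alpha_{\Omega}(\xi)=R_{\sup}/R_{\inf}$. Since $d_{\mathbb{H}^1}(\xi,\eta)=\rho(\xi^{-1}o\eta)$, the right-hand inequality of Property (\ref{infrsup}) reads $r_{\Omega}(\xi)\le R_{\sup}$, which already controls the numerator of $\alpha_{\Omega}(\xi)$. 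It then remains only to bound $R_{\inf}$ from above by $(2|\Omega|/\pi^2)^{1/4}$ and to multiply the two estimates together.

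The geometric heart of the argument is the inclusion $B(\xi,R_{\inf})\subseteq\Omega$. Granting it, I will use that the Riemannian volume agrees with Lebesgue measure and is left invariant, because each left translation $L_{\xi}$ is an affine map of $\mathbb{R}^3$ with unit Jacobian; hence $|B(\xi,R_{\inf})|=|B(0,R_{\inf})|=\frac{\pi^2}{2}R_{\inf}^4$ by (\ref{ballvolume}). Monotonicity of the measure then gives $\frac{\pi^2}{2}R_{\inf}^4\le|\Omega|$, that is $R_{\inf}\le(2|\Omega|/\pi^2)^{1/4}$. Combining this with $r_{\Omega}(\xi)\le R_{\sup}=\alpha_{\Omega}(\xi)\,R_{\inf}$ produces exactly the claimed inequality (\ref{rR}).

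The step I expect to be the main obstacle is establishing $B(\xi,R_{\inf})\subseteq\Omega$, and I plan to obtain it from the star-shapedness of the Koranyi ball about its own centre. The ball $B(0,R)=\{(x,y,t):(x^2+y^2)^2+t^2<R^4\}$ is star-shaped with respect to the origin in the ordinary Euclidean sense: for $\lambda\in[0,1]$ one has $\lambda^4(x^2+y^2)^2+\lambda^2t^2\le\lambda^2\bigl((x^2+y^2)^2+t^2\bigr)<R^4$, so the segment from $0$ to any interior point stays inside. Because $B(\xi,R)=L_{\xi}(B(0,R))$ with $L_{\xi}$ affine and $L_{\xi}(0)=\xi$, the ball $B(\xi,R)$ is star-shaped about $\xi$. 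If some $\eta_0\in B(\xi,R_{\inf})$ failed to lie in $\Omega$, then $\eta_0\notin\partial\Omega$ (every boundary point is at distance at least $R_{\inf}$ from $\xi$), so $\eta_0$ lies in the open exterior $\mathbb{H}^1\setminus\overline{\Omega}$; the Euclidean segment $[\xi,\eta_0]$ would stay in $B(\xi,R_{\inf})$ while joining the interior point $\xi$ to the exterior point $\eta_0$, and, being connected, it must meet $\partial\Omega$ at some $\zeta$ with $d_{\mathbb{H}^1}(\xi,\zeta)<R_{\inf}$, contradicting the definition of $R_{\inf}$. Here the role of the regularity hypotheses (\ref{boulexter}) on $\Omega$ and $\mathbb{H}^1\setminus\overline{\Omega}$ is simply to guarantee that the harmonic radius and Property (\ref{infrsup}) are available; the inclusion itself needs only that $\Omega$ is open together with the star-shapedness just verified, and the rest is bookkeeping.
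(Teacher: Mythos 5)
Your proof is correct and follows essentially the same route as the paper: bound $r_{\Omega}(\xi)$ above by $\sup_{\eta\in\partial\Omega}d_{\mathbb{H}^1}(\xi,\eta)$ via (\ref{infrsup}), then bound $\inf_{\eta\in\partial\Omega}d_{\mathbb{H}^1}(\xi,\eta)$ by $(2|\Omega|/\pi^{2})^{1/4}$ using the inclusion $B(\xi,R_{\inf})\subseteq\Omega$ and the volume formula (\ref{ballvolume}). The only difference is that you actually justify that inclusion (via star-shapedness of Koranyi balls under left translation and a connectedness argument), whereas the paper simply asserts it as a consequence of the exterior ball property; your version is the more careful one.
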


 \begin{proof}

 Since $\Omega$ and $\mathbb{H}^{1}\setminus \bar{\Omega}$ satisfy  (\ref{boulexter}) , the ball $ B(\xi, \displaystyle \inf_{\eta \in \partial \Omega} d_{\mathbb{H}^1}(\xi, \eta) ) \subset \Omega, $ hence

 $$ \vert B(\xi, \displaystyle \inf_{\eta \in \partial \Omega} d_{\mathbb{H}^1}(\xi, \eta) )\vert \leq \vert \Omega \vert, $$

 thus

 $$ \frac{\pi^2}{2} \left( \inf_{ \eta \in \partial \Omega} d_{\mathbb{H}^1}(\xi, \eta) \right)^4 \leq\, |\Omega|, $$

 which implies

 \begin{equation} \label{infdh} \inf_{ \eta \in \partial \Omega} d_{\mathbb{H}^1}(\xi, \eta) \leq \,\displaystyle{(\frac{2|\Omega|}{\pi^{2}})^{\frac{1}{4}}}. \end{equation}

 The result follows using ($\ref{infrsup}$) and ($\ref{infdh}$).

We derive the following result for the Koranyi's ball of center $0$
and radius $r_{\Omega}(\xi)$
 \begin{corollary} For any $ \xi \in \Omega ,$ we have

 \begin{equation} \label{Balpha}  \vert B(\xi, r_{\Omega}(\xi)) \vert \leq \alpha_{\Omega}^{4}(\xi)\, \vert \Omega \vert \end{equation}\begin{flushright}
    $\square$
\end{flushright}

 \begin{proof}

 The result is a direct consequence of ($\ref{ballvolume}$) and ($\ref{rR}$).
 \end{proof}
 \end{corollary}

 \end{proof}

 \begin{example} \label{examplealpha}

 We come back to Example $\ref{exampleharmonicradius}$, where, we showed that  the harmonic radius of the Koranyi's ball $ B(\xi, r) $ at $ \xi $ is equal to the radius $ r $. Since
 $$ \sup_{ \eta \in \partial B(\xi, r)} d_{\mathbb{H}^1}(\xi, \eta) = \inf_{ \eta \in \partial B(\xi, r)} d_{\mathbb{H}^1}(\xi, \eta) = r, $$

  we derive that

 $$ \alpha_{B(\xi, r)}(\xi) = 1. $$

 Then, using ($\ref{ballvolume}$), we obtain the equality case in ($\ref{rR}$) and ($\ref{Balpha}$).\begin{flushright}
    $\square$
\end{flushright}

 \end{example}

 \begin{proposition}

 For almost every positive real $ \tau ,$ we have

 \begin{equation} \label{alpha} \alpha_{\Omega(\tau)}(\xi) \leq \alpha_{\Omega}(\xi), \end{equation}\begin{flushright}
    $\square$
\end{flushright}

 \end{proposition}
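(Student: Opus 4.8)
The plan is to reduce the whole statement to the behavior of the regular part $H_\xi$ of the Green's function on the two boundaries $\partial\Omega$ and $\partial\Omega(\tau)$. The key observation is that both are level sets of $G_\xi$, at the levels $0$ and $\tau$ respectively, so on each of them the Koranyi distance to $\xi$ is a strictly decreasing function of $H_\xi$ alone. First I would note that on $\partial\Omega(\tau)$ the identity $G_\xi=\tau$, combined with $\Gamma_\xi(\eta)=\frac{1}{8\pi d_{\mathbb{H}^1}(\xi,\eta)^2}$, reads $\frac{1}{8\pi d_{\mathbb{H}^1}(\xi,\eta)^2}=\tau+H_\xi(\eta)$, whence $d_{\mathbb{H}^1}(\xi,\eta)=\bigl(8\pi(\tau+H_\xi(\eta))\bigr)^{-1/2}$. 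Taking the supremum and the infimum over $\partial\Omega(\tau)$ and recalling that $d_{\mathbb{H}^1}(\xi,\cdot)$ decreases as $H_\xi$ increases gives
$$\alpha_{\Omega(\tau)}(\xi)=\sqrt{\frac{\tau+\sup_{\partial\Omega(\tau)}H_\xi}{\tau+\inf_{\partial\Omega(\tau)}H_\xi}}.$$
The same computation at level $0$, which is exactly $\partial\Omega$ (where $G_\xi=0$, hence $\Gamma_\xi=H_\xi$), yields $\alpha_\Omega(\xi)=\sqrt{\sup_{\partial\Omega}H_\xi\,/\,\inf_{\partial\Omega}H_\xi}$.

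Next I would invoke the maximum principle (Theorem \ref{maximumprinciple}) for the harmonic function $H_\xi$. Since $\tau>0$ and $G_\xi=0$ on $\partial\Omega$, the set $\overline{\Omega(\tau)}$ is compactly contained in $\Omega$, so every value taken by $H_\xi$ on $\partial\Omega(\tau)$ lies between $\inf_\Omega H_\xi=\inf_{\partial\Omega}H_\xi$ and $\sup_\Omega H_\xi=\sup_{\partial\Omega}H_\xi$. Writing $a=\inf_{\partial\Omega}H_\xi$, $b=\sup_{\partial\Omega}H_\xi$, $a'=\inf_{\partial\Omega(\tau)}H_\xi$, $b'=\sup_{\partial\Omega(\tau)}H_\xi$, this produces the chain $a\le a'\le b'\le b$.

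Finally I would conclude with an elementary monotonicity argument. Because $b'\le b$ and $a'\ge a$,
$$\frac{\tau+b'}{\tau+a'}\le\frac{\tau+b}{\tau+a},$$
and since $\tau>0$ with $a\le b$, the inequality $(\tau+b)\,a\le b\,(\tau+a)$, which amounts to $\tau a\le\tau b$, shows that $\frac{\tau+b}{\tau+a}\le\frac{b}{a}$. Chaining these two bounds and taking square roots gives $\alpha_{\Omega(\tau)}(\xi)\le\alpha_\Omega(\xi)$, as desired.

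The only genuinely delicate point is justifying the reduction for \emph{almost every} $\tau$ rather than for every $\tau$: one must appeal to Sard's theorem so that for a.e. $\tau$ the level set $\partial\Omega(\tau)$ is a smooth surface bounding a regular domain $\Omega(\tau)$, for which $\alpha_{\Omega(\tau)}(\xi)$ is defined exactly as in the preceding propositions, and one must know that $H_\xi$ extends continuously up to $\partial\Omega(\tau)$, which follows from the hypoellipticity of $\Delta_{\mathbb{H}^1}$. Once these regularity matters are settled, the inequalities above are routine and carry the result.
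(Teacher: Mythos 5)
Your proof is correct and follows essentially the same route as the paper: on $\partial\Omega(\tau)$ you write $d_{\mathbb{H}^1}(\xi,\eta)^{-2}=8\pi(\tau+H_\xi(\eta))$, bound $H_\xi$ there by its extrema on $\partial\Omega$ via the maximum principle (the paper's inequality \eqref{infHsup}), and finish with the elementary observation that $\frac{\tau+b}{\tau+a}\le\frac{b}{a}$ for $0<a\le b$, which is exactly the paper's factor $\frac{1+8\pi\tau(\inf d)^2}{1+8\pi\tau(\sup d)^2}\le 1$ in different clothing. Your closing remarks on Sard's theorem and the regularity of the level sets address a point the paper passes over silently, but they do not change the argument.
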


 \begin{proof}
 The Green's function of $ \Omega(\tau) $ is given by

 $$ G_{\Omega(\tau), \xi}(\eta) =  \frac{1}{8\pi \rho^2 ( {\xi}^{-1} o \eta ) } - H_{\xi}(\eta) - \tau, $$

 so, for all $ \eta_{\tau} \in \partial \Omega(\tau) $

 $$ \frac{1}{8\pi \rho^2 ( {\xi}^{-1} o \eta_{\tau} ) } = H_{\xi}(\eta_{\tau}) + \tau. $$

 Next, we use ($\ref{infHsup}$), to obtain

 $$ \frac{1}{8\pi \left( \displaystyle \sup_{ \eta \in \partial \Omega} d_{\mathbb{H}^1}(\xi, \eta)\right)^2 } + \tau \leq \frac{1}{8\pi \rho^2 ( {\xi}^{-1} o \eta_{\tau} ) } \leq \frac{1}{8\pi \left( \displaystyle \inf_{ \eta \in \partial \Omega} d_{\mathbb{H}^1}(\xi, \eta)\right)^2 } + \tau, $$

hence

$$ \frac{ \left( \displaystyle \sup_{ \eta \in \partial \Omega} d_{\mathbb{H}^1}(\xi, \eta)\right)^2}{1+\tau \,8\pi \left( \displaystyle \sup_{ \eta \in \partial \Omega} d_{\mathbb{H}^1}(\xi, \eta)\right)^2} \geq \rho^2 ( {\xi}^{-1} o \eta_{\tau} ) \geq \frac{ \left( \displaystyle \inf_{ \eta \in \partial \Omega} d_{\mathbb{H}^1}(\xi, \eta)\right)^2}{1+\tau\,8\pi \left(\displaystyle \inf_{ \eta \in \partial \Omega} d_{\mathbb{H}^1}(\xi, \eta)\right)^2}. $$

Therefore, we obtain

\begin{equation} \label{suptau} \frac{ \left( \displaystyle \sup_{ \eta \in \partial \Omega} d_{\mathbb{H}^1}(\xi, \eta)\right)^2}{1+\tau\, 8\pi \left( \displaystyle \sup_{ \eta \in \partial \Omega} d_{\mathbb{H}^1}(\xi, \eta)\right)^2} \geq \displaystyle \sup_{ \eta_{\tau} \in \partial \Omega(\tau)} \rho^2 ( {\xi}^{-1} o \eta_{\tau}) = \left( \displaystyle \sup_{ \eta_{\tau} \in \partial \Omega(\tau)} d_{\mathbb{H}^1}(\xi, \eta_{\tau} )\right)^2  \end{equation}

and

\begin{equation} \label{inftau} \left( \displaystyle \inf_{ \eta_{\tau} \in \partial \Omega(\tau)} d_{\mathbb{H}^1}(\xi, \eta_{\tau} )\right)^2 = \displaystyle \inf_{ \eta_{\tau} \in \partial \Omega(\tau)} \rho^2 ( {\xi}^{-1} o \eta_{\tau} ) \geq \frac{ \left(\displaystyle \inf_{ \eta \in \partial \Omega} d_{\mathbb{H}^1}(\xi, \eta)\right)^2}{1+\tau \,8\pi \left( \displaystyle \inf_{ \eta \in \partial \Omega} d_{\mathbb{H}^1}(\xi, \eta)\right)^2}. \end{equation}

By using ($\ref{suptau}$) and ($\ref{inftau}$), we obtain

$$ \alpha_{\Omega(\tau)}^{2}(\xi) \leq \alpha_{\Omega}^{2}(\xi) \frac{1+8\pi\tau \left( \displaystyle \inf_{\eta \in \partial \Omega} d_{\mathbb{H}^1}(\xi, \eta)\right)^2}{1+8\pi\tau \left( \displaystyle \sup_{ \eta \in \partial \Omega} d_{\mathbb{H}^1}(\xi, \eta)\right)^2} \leq \alpha_{\Omega}^{2}(\xi). $$

 \end{proof}

As a consequence of the proposition above, we obtain

 \begin{corollary} For almost every positive real $\tau,$

 \begin{equation} \label{importantcorollary} \left| B(0, r_{\Omega(\tau)}(\xi) ) \right| \leq  \alpha_{\Omega}^{4}(\xi) \left| \Omega(\tau) \right| \end{equation}

 \end{corollary}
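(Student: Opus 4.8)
The plan is to apply the already-established estimate \eqref{Balpha} not to $\Omega$ itself but to its level set $\Omega(\tau)$, and then to replace the resulting factor $\alpha_{\Omega(\tau)}^{4}(\xi)$ by the larger quantity $\alpha_{\Omega}^{4}(\xi)$ by means of Proposition \eqref{alpha}. The volume being the Lebesgue measure, which is left-invariant, will let me freely recenter the Koranyi ball at $0$.

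First I would observe that, for almost every positive $\tau$, the level set $\Omega(\tau)=\{\eta\in\Omega,\ G_{\xi}(\eta)>\tau\}$ is itself a regular domain containing $\xi$: by \eqref{rtau} (and the computation preceding it) its Green's function with pole at $\xi$ is $G_{\xi}-\tau$, its regular part is $H_{\xi}+\tau$, and its harmonic radius $r_{\Omega(\tau)}(\xi)$ is well defined. Hence the hypotheses under which \eqref{Balpha} was derived are met with $\Omega$ replaced by $\Omega(\tau)$, and that estimate yields
$$\left| B(\xi, r_{\Omega(\tau)}(\xi)) \right| \leq \alpha_{\Omega(\tau)}^{4}(\xi)\,\left|\Omega(\tau)\right|.$$
Next, since the volume of a Koranyi ball is $\frac{\pi^{2}}{2}$ times the fourth power of its radius by \eqref{ballvolume}, it does not depend on the center, so that $\left|B(0, r_{\Omega(\tau)}(\xi))\right| = \left|B(\xi, r_{\Omega(\tau)}(\xi))\right|$. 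Finally Proposition \eqref{alpha} gives $\alpha_{\Omega(\tau)}(\xi)\leq \alpha_{\Omega}(\xi)$ for almost every positive $\tau$, whence $\alpha_{\Omega(\tau)}^{4}(\xi)\leq \alpha_{\Omega}^{4}(\xi)$. Chaining these three facts produces
$$\left| B(0, r_{\Omega(\tau)}(\xi)) \right| = \left| B(\xi, r_{\Omega(\tau)}(\xi)) \right| \leq \alpha_{\Omega(\tau)}^{4}(\xi)\,\left|\Omega(\tau)\right| \leq \alpha_{\Omega}^{4}(\xi)\,\left|\Omega(\tau)\right|,$$
which is exactly \eqref{importantcorollary}.

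The argument is essentially a concatenation of results already proved, so there is no genuine analytic obstacle; the only point requiring care is the justification that $\Omega(\tau)$ qualifies as an admissible domain for \eqref{Balpha}. This is precisely where the phrase \emph{almost every} enters: for a.e. $\tau$ the boundary $\partial\Omega(\tau)$ is a smooth regular level surface (a Sard-type statement, $\tau$ avoiding the critical values of $G_{\xi}$), so that $\Omega(\tau)$ and its complement inherit the uniform exterior ball regularity \eqref{boulexter} needed to apply the estimate. Reassuringly, this is the same set of full-measure $\tau$ for which Proposition \eqref{alpha} is asserted, so the two ingredients are available simultaneously and no extra hypotheses are incurred.
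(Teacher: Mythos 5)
Your argument is exactly the paper's: apply \eqref{Balpha} to the level set $\Omega(\tau)$ and then replace $\alpha_{\Omega(\tau)}^{4}(\xi)$ by $\alpha_{\Omega}^{4}(\xi)$ via Proposition \eqref{alpha}. The only additions are the (correct) remarks that the Koranyi ball's volume is independent of its center and that the ``almost every $\tau$'' qualifier handles the regularity of $\Omega(\tau)$, which the paper leaves implicit.
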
\begin{flushright}
    $\square$
\end{flushright}

\begin{proof}

We apply ($\ref{Balpha}$) to the domain $ \Omega(\tau) ,$  then we
use ($\ref{alpha}$),
  the result follows.

  \end{proof}

  \section{ Harmonic Transplantation}

  \subsection{ A result of P.Pansu in $ \mathbb{H}^1 $  }

    We have the following result

  If $ \Omega \subset \mathbb{H}^1 $ is a $ C^1 $ bounded domain, then $ P_{\mathbb{H}^1} ( \Omega ) ,$ the horizontal perimeter of $ \Omega $ coincides with the area of
   $ \partial \Omega .$

 \begin{equation} \label{areaomega} A(\partial \Omega) = \int_{\partial \Omega} \left| N_{ \mathbb{H}^1 } \right| d\Sigma_{\Omega}, \end{equation}

 where $ N_{ H } = N - < N, T> T $ denotes the orthogonal projection of $ N $ onto the horizontal distribution, and $ d\Sigma $
 is the Riemannian area element induced by the metric $ g_{\mathbb{H}^1} $ on $ \partial \Omega $ (one can see $ \cite{14, 15, 23} $).\begin{flushright}
    $\square$
\end{flushright}

   Recall that $ P_{\mathbb{H}^1} ( \Omega )$
   is defined by

 $$ P_{\mathbb{H}^1} ( \Omega ) = \sup \left\{ \int_{ \Omega } div V dv ; \ V \mbox{ horizontal of classe} C^1, \ \left| V \right| \leq
 1 \right\}, $$

  where $ div V $ is the Riemannian divergence of the vector field $ V $, and $ dv $ is the volume element associated to the left-invariant
   Riemannian metric $g_{\mathbb{H}^1}.$

  As a consequence, one obtain

 \begin{corollary}{($\cite{14, 15}$)} \label{phd}

 For a set $ D $ of $\mathbb{H}^1$ bounded by a surface $ \lbrace u = 0 \rbrace $ of class $ C^1 $

 $$  P_{\mathbb{H}^1} ( D ) = \int_{\lbrace u = 0 \rbrace} \frac{ \left| \nabla_{\mathbb{H}^1}u \right|}{ \left| \nabla u \right|}
  d\Sigma_{D}. $$\begin{flushright}
    $\square$
\end{flushright}

 \end{corollary}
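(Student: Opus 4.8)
The plan is to reduce the claim to a pointwise computation of the horizontal normal factor appearing in the area formula. By the result stated just above, since $\partial D = \lbrace u = 0\rbrace$ is of class $C^1$, the horizontal perimeter coincides with the area of the boundary, so
$$ P_{\mathbb{H}^1}(D) = A(\partial D) = \int_{\lbrace u = 0 \rbrace} \left| N_{\mathbb{H}^1} \right| \, d\Sigma_D, $$
where $N$ denotes the unit normal to $\partial D$ with respect to $g_{\mathbb{H}^1}$ and $N_{\mathbb{H}^1} = N - \langle N, T\rangle T$ is its horizontal projection. It therefore suffices to show that at each point of the regular level set $\lbrace u = 0\rbrace$ one has $\left| N_{\mathbb{H}^1}\right| = \left|\nabla_{\mathbb{H}^1} u\right| / \left|\nabla u\right|$.

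First I would identify the unit normal. Since $(X, Y, T)$ is an orthonormal frame for $g_{\mathbb{H}^1}$, the Riemannian gradient decomposes as $\nabla u = (Xu)X + (Yu)Y + (Tu)T$, and because $\lbrace u = 0\rbrace$ is a regular $C^1$ level set, so that $\nabla u \neq 0$ along it, the unit normal is $N = \nabla u / \left|\nabla u\right|$ with $\left|\nabla u\right| = \sqrt{(Xu)^2 + (Yu)^2 + (Tu)^2}$. Projecting onto the horizontal distribution and using the orthonormality of the frame gives $N_{\mathbb{H}^1} = \bigl( (Xu)X + (Yu)Y \bigr) / \left|\nabla u\right|$, whence
$$ \left| N_{\mathbb{H}^1}\right| = \frac{\sqrt{(Xu)^2 + (Yu)^2}}{\left|\nabla u\right|} = \frac{\left|\nabla_{\mathbb{H}^1} u\right|}{\left|\nabla u\right|}, $$
the last equality being the definition (\ref{gradient}) of the horizontal gradient together with $\left|X\right| = \left|Y\right| = 1$. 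Substituting this into the area formula yields the corollary.

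The computation is elementary; the only point requiring care is consistency of notation, namely that both $N$ and $\left|\nabla u\right|$ are taken with respect to the left-invariant metric $g_{\mathbb{H}^1}$, so that $\left|\nabla u\right|$ is the full Riemannian norm including the $Tu$ component, whereas $\left|\nabla_{\mathbb{H}^1} u\right|$ retains only the horizontal components $Xu$ and $Yu$. No genuine obstacle arises: the substantive analytic content, the equality of the horizontal perimeter with the boundary area for $C^1$ domains, has already been invoked, and the corollary is merely a translation of the geometric factor $\left| N_{\mathbb{H}^1}\right|$ into the language of the defining function $u$.
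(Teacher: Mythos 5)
Your proposal is correct and follows exactly the route the paper intends: the corollary is presented there as an immediate consequence of the area formula $A(\partial D)=\int_{\partial D}\left|N_{\mathbb{H}^1}\right|d\Sigma_D$ (the paper itself gives no written proof, only the citation), and your pointwise computation $\left|N_{\mathbb{H}^1}\right|=\left|\nabla_{\mathbb{H}^1}u\right|/\left|\nabla u\right|$ in the orthonormal frame $(X,Y,T)$ supplies precisely the missing step. The remark that both $N$ and $\left|\nabla u\right|$ must be taken with respect to $g_{\mathbb{H}^1}$ is the right point of care, and the argument is complete.
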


We have the following result of P.Pansu in $ \cite{6, 7} $
 \begin{theorem}\label{Pansu's theorem}

 Let $ \Omega \subset \mathbb{H}^1 $ be  a $ C^1 $ bounded domain,
 then

 $$  \left| \Omega \right|^{\frac{3}{4}} \leq   C P_{\mathbb{H}^1} ( \Omega ) $$

    with  $ C = \displaystyle (\frac{3}{2\pi})^{\frac{1}{4}}$\begin{flushright}
    $\square$
\end{flushright}

 \end{theorem}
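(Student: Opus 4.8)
The inequality is the sub-Riemannian isoperimetric inequality on $\mathbb{H}^1$, whose homogeneous dimension is $Q=4$; this is exactly why the exponent is $\frac{Q-1}{Q}=\frac34$. The plan is to deduce it from an $L^1$-type Sobolev (Gagliardo--Nirenberg) inequality
$$\Big(\int_{\mathbb{H}^1}|u|^{4/3}\,dv\Big)^{3/4}\le C\int_{\mathbb{H}^1}|\nabla_{\mathbb{H}^1}u|\,dv,\qquad u\in\mathcal{C}_0^\infty(\mathbb{H}^1),$$
and then to transfer it from functions to the set $\Omega$ by a coarea argument, using Corollary \ref{phd} and \eqref{areaomega} to identify the limiting gradient integral with $P_{\mathbb{H}^1}(\Omega)$.

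First I would establish a representation formula. Since $\Gamma_\xi$ is the fundamental solution of $-\Delta_{\mathbb{H}^1}$, for $u\in\mathcal{C}_0^\infty$ one has $u(\xi)=-\int \Gamma_\xi\,\Delta_{\mathbb{H}^1}u\,dv$, and integrating by parts gives
$$u(\xi)=\int_{\mathbb{H}^1}\big\langle \nabla_{\mathbb{H}^1}\Gamma_\xi(\eta),\nabla_{\mathbb{H}^1}u(\eta)\big\rangle\,dv(\eta).$$
Because $\Gamma=\frac{1}{8\pi\rho^2}$ is homogeneous of degree $-2$ under the dilations $\delta_\lambda$, its horizontal gradient is homogeneous of degree $-(Q-1)=-3$, so $|u(\xi)|\le (K\ast|\nabla_{\mathbb{H}^1}u|)(\xi)$ with $K=|\nabla_{\mathbb{H}^1}\Gamma|$ a kernel of homogeneity $-3$. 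The operator $f\mapsto K\ast f$ is a fractional integral of order $1$ on the homogeneous group $\mathbb{H}^1$; by the Hardy--Littlewood--Sobolev (Folland--Stein) weak-type estimate together with a layer-cake truncation argument it maps $L^1$ into $L^{Q/(Q-1)}=L^{4/3}$, which yields the Sobolev inequality above.

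Next I would transfer this to $\Omega$. Mollifying $\mathbf{1}_\Omega$ by group convolution produces $u_\varepsilon\in\mathcal{C}_0^\infty$ with $u_\varepsilon\to\mathbf{1}_\Omega$ in $L^{4/3}$, so that $\|u_\varepsilon\|_{4/3}\to|\Omega|^{3/4}$; and, since $\partial\Omega$ is $C^1$, the coarea formula (equivalently Corollary \ref{phd}) gives $\int|\nabla_{\mathbb{H}^1}u_\varepsilon|\,dv\to A(\partial\Omega)=P_{\mathbb{H}^1}(\Omega)$ by \eqref{areaomega}. Passing to the limit in the Sobolev inequality produces $|\Omega|^{3/4}\le C\,P_{\mathbb{H}^1}(\Omega)$ for some constant $C$.

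The main obstacle is the explicit constant $C=\big(\tfrac{3}{2\pi}\big)^{1/4}$: the crude functional route above produces only some non-explicit $C$, whereas the stated value is Pansu's sharpened constant. Pinning it down requires tracking the exact normalization coming from $\Gamma=\frac{1}{8\pi\rho^2}$, the degree of homogeneity $Q=4$, and the ball-volume identity \eqref{ballvolume}, $|B(0,R)|=\frac{\pi^2}{2}R^4$; this is precisely the delicate quantitative step carried out by Pansu in \cite{6,7}, and it is where I expect the real work (and a genuinely geometric, rather than purely functional-analytic, estimate) to be needed.
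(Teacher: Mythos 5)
The first thing to note is that the paper does not prove this statement at all: Theorem \ref{Pansu's theorem} is quoted from Pansu \cite{6, 7} and used as a black box, so there is no in-paper argument to compare yours against. Judged on its own terms, your outline is the standard functional-analytic route to \emph{an} isoperimetric inequality on $\mathbb{H}^1$ (representation of $u$ against the fundamental solution, a kernel of homogeneity $-(Q-1)$ with $Q=4$, the Folland--Stein weak-type $(1,4/3)$ bound upgraded by truncation of $u$ to the strong $L^1\to L^{4/3}$ Sobolev inequality, then mollification of $\mathbf{1}_\Omega$ and identification of the limiting gradient mass with $P_{\mathbb{H}^1}(\Omega)$ via \eqref{areaomega} and Corollary \ref{phd}). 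That skeleton is sound and does yield $|\Omega|^{3/4}\le C\,P_{\mathbb{H}^1}(\Omega)$ for \emph{some} finite $C$.

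The genuine gap is the constant, and you have named it yourself: the statement to be proved is not merely the existence of a constant but the specific value $C=(3/(2\pi))^{1/4}$, and that value is not decorative here --- it propagates into $C_{\Omega}(\xi)=16\sqrt{2}\,\alpha_{\Omega}^{6}(\xi)\,C^{2}$ in Proposition \ref{thr} and hence into the eigenvalue bounds of Theorems \ref{eigenvalue} and \ref{Harmonic center}. Your proposed repair (``track the normalizations of $\Gamma=\frac{1}{8\pi\rho^2}$ and of $|B(0,R)|=\frac{\pi^2}{2}R^4$'') will not recover it: the Hardy--Littlewood--Sobolev and layer-cake steps each lose unquantified factors, and the extremals of the convolution inequality are not characteristic functions, so no amount of bookkeeping along that route produces Pansu's constant. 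Pansu's own argument is of a different, integral-geometric nature: almost every point of $\Omega$ is joined to $\partial\Omega$ by a Carnot--Carath\'eodory geodesic, and one bounds $|\Omega|$ by the volume swept out by unit-speed geodesics issuing from the boundary, the explicit constant arising from the geometry of Carnot--Carath\'eodory balls rather than from the Koranyi-ball volume \eqref{ballvolume}. So your proposal proves a strictly weaker statement than the one asserted; to close the gap you must either reproduce Pansu's geometric estimate or, as the paper does, cite \cite{6, 7} for the constant.
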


  \subsection{Harmonic Transplantation}

\begin{definition}
 Let $ \Omega $  be a bounded regular subset of $\mathbb{H}^{1}$ satisfying $(\ref{boulexter})$ and $ \xi $  a point  in $\Omega.$
  If $ u : B(0, r_{\Omega}(\xi))\rightarrow \mathbb{R}_{+} $ is a $\rho$-radial function on the Koranyi's ball of center $0$
   and radius $r_{\Omega}(\xi)$ and $\varphi $  is a function defined by
$ \varphi( G_{\left\{ B(0, r_{\Omega}(\xi)), 0 \right\}})=u. $ Then
the harmonic transplantation $ U_{\xi} $ of $ u $ into
 $ \Omega $ is the function
$$ U_{\xi} = \varphi (G_{\xi}) .$$\begin{flushright}
    $\square$
\end{flushright}
\end{definition}

  \begin{proposition} \label{gradconserve}

  Let $ \Omega $ be a regular bounded domain of $ \mathbb{H}^1 $ and $ \xi $ any point in $ \Omega $, then

   $$  \int_{\Omega} \left| \nabla_{\mathbb{H}^1} U_{\xi} \right|^2 \ dv = \int_{B(0, r_{\Omega}(\xi))} \left| \nabla_{\mathbb{H}^1}u \right|^2 \ dv. $$
\begin{flushright}
    $\square$
\end{flushright}
 \end{proposition}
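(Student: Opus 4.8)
The plan is to collapse each side of the identity onto one and the same one--dimensional integral over the level parameter of the relevant Green's function, the matching being supplied by the normalisation in Theorem~\ref{greentheorem}. The starting observation is that the horizontal gradient obeys the chain rule: since $X$ and $Y$ are first--order derivations and $U_{\xi}=\varphi(G_{\xi})$, one has $XU_{\xi}=\varphi'(G_{\xi})\,XG_{\xi}$ and $YU_{\xi}=\varphi'(G_{\xi})\,YG_{\xi}$, whence by \eqref{gradient}
$$\nabla_{\mathbb{H}^1}U_{\xi}=\varphi'(G_{\xi})\,\nabla_{\mathbb{H}^1}G_{\xi},\qquad \left|\nabla_{\mathbb{H}^1}U_{\xi}\right|^2=\bigl(\varphi'(G_{\xi})\bigr)^2\left|\nabla_{\mathbb{H}^1}G_{\xi}\right|^2.$$

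Next I would apply the Riemannian coarea formula to the function $G_{\xi}$ on $\Omega$. Taking $f=\bigl(\varphi'(G_{\xi})\bigr)^2\left|\nabla_{\mathbb{H}^1}G_{\xi}\right|^2$ and slicing along the level sets $\partial\Omega(\tau)=\{G_{\xi}=\tau\}$, which sweep out $\Omega$ as $\tau$ runs over $(0,+\infty)$, the coarea formula gives
$$\int_{\Omega}\left|\nabla_{\mathbb{H}^1}U_{\xi}\right|^2 dv=\int_{0}^{+\infty}\left(\int_{\partial\Omega(\tau)}\frac{\bigl(\varphi'(G_{\xi})\bigr)^2\left|\nabla_{\mathbb{H}^1}G_{\xi}\right|^2}{\left|\nabla G_{\xi}\right|}\,d\Sigma_{\Omega(\tau)}\right)d\tau.$$
On $\partial\Omega(\tau)$ we have $G_{\xi}\equiv\tau$, so $\varphi'(G_{\xi})=\varphi'(\tau)$ is constant on each slice and pulls out of the inner integral; note that it is precisely the Riemannian norm $\left|\nabla G_{\xi}\right|$ (not the horizontal one) that appears through the coarea formula, so the integrand matches Theorem~\ref{greentheorem} exactly. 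That theorem identifies the remaining inner integral as $1$ for almost every $\tau$, and therefore
$$\int_{\Omega}\left|\nabla_{\mathbb{H}^1}U_{\xi}\right|^2 dv=\int_{0}^{+\infty}\bigl(\varphi'(\tau)\bigr)^2\,d\tau.$$

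Finally I would run exactly the same computation on the Koranyi ball $B=B(0,r_{\Omega}(\xi))$, which is itself a regular domain to which Theorem~\ref{greentheorem} applies, using $u=\varphi\bigl(G_{\{B;0\}}\bigr)$ and slicing along the level sets of its Green's function. This yields $\int_{B}\left|\nabla_{\mathbb{H}^1}u\right|^2 dv=\int_{0}^{+\infty}\bigl(\varphi'(\tau)\bigr)^2\,d\tau$ as well, so both Dirichlet integrals equal the same quantity and the asserted equality follows. The step I expect to require the most care is the justification of the coarea slicing near the pole $\xi$, where $G_{\xi}\to+\infty$ and $\left|\nabla G_{\xi}\right|$ is singular: rigorously one should carry out the argument on $\Omega\setminus\overline{B(\xi,\eps)}$ and let $\eps\to0$, invoking Sard's theorem so that almost every $\tau$ is a regular value (hence $\partial\Omega(\tau)$ is a smooth hypersurface on which Theorem~\ref{greentheorem} is valid) and checking that the contribution of the removed neighbourhood of the pole vanishes in the limit.
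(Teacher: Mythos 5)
Your proposal is correct and follows essentially the same route as the paper: chain rule for the horizontal gradient, coarea slicing along the level sets of $G_{\xi}$, the normalisation $\int_{\partial\Omega(\tau)}\left|\nabla_{\mathbb{H}^1}G_{\xi}\right|^{2}/\left|\nabla G_{\xi}\right|\,d\Sigma_{\Omega(\tau)}=1$ from Theorem~\ref{greentheorem}, and the observation that the resulting integral $\int_{0}^{\infty}\left(\varphi'(\tau)\right)^{2}d\tau$ is domain-independent so the same computation on the Koranyi ball closes the argument. Your closing remark about excising a neighbourhood of the pole and invoking Sard's theorem is a justified refinement that the paper leaves implicit.
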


 \begin{proof}
 Using the co-area formula given in $\cite{18, 19}$, we obtain

 $$ \displaystyle \int_{\Omega} \left| \nabla_{\mathbb{H}^1} U_{\xi} \right|^2 \ dv  =  \displaystyle \int_{0}^{\infty} \left( \int_{ \left\{ G_{\xi} = \tau \right\} } \displaystyle \frac{ \left\langle \nabla_{\mathbb{H}^1} U_{\xi}, \nabla_{\mathbb{H}^1}U_{\xi} \right\rangle }{ \left| \nabla G_{\xi} \right|} \ d\Sigma_{\Omega(\tau)}  \right) \ d \tau $$

 On the other hand

 $$ \nabla_{\mathbb{H}^1}U_{\xi} = \mu^{'} ( G_{\xi} ) \nabla_{\mathbb{H}^1}G_{\xi}. $$

Since

 $$ \begin{array}{lll}

 \displaystyle \int_{\Omega} \left| \nabla_{\mathbb{H}^1} U_{\xi} \right|^2 \ dv & = & \displaystyle \int_{0}^{\infty} \left( \int_{ \left\{ G_{\xi} = \tau \right\} } \displaystyle  \frac{ \left| \mu^{'} ( G_{\xi} ) \right|^2  \left\langle \nabla_{\mathbb{H}^1}G_{\xi}, \nabla_{\mathbb{H}^1}G_{\xi} \right\rangle }{ \left| \nabla G_{\xi} \right|} \ d\Sigma_{\Omega(\tau)}  \right) \ d \tau \\ \\

  & = & \displaystyle \int_{0}^{\infty}  \left| \mu^{'}( \tau) \right|^2 \left( \displaystyle\int_{\partial \Omega(\tau)} \frac{\left| \nabla_{\mathbb{H}^1}G_{\xi} \right|^2}{\left| \nabla G_{\xi} \right|} d \Sigma_{\Omega(\tau)} \right)  \ d
  \tau.

 \end{array} $$

 Therefore, using  Theorem $\ref{greentheorem},$ we obtain

  $$ \int_{\Omega} \left| \nabla_{\mathbb{H}^1} U_{\xi} \right|^2 \ dv = \int_{0}^{\infty}  \left| \mu^{'}( \tau) \right|^2 \ d\tau. $$

  We remark that the integral above is independent of $ \Omega $, thus by the same way, we prove that

$$ \int_{B(0, r_{\Omega}(\xi))} \left| \nabla_{\mathbb{H}^1}u \right|^2 \ dv = \int_{0}^{\infty}  \left| \mu^{'}( \tau) \right|^2 \ d\tau , $$

which completes the proof of the proposition.

\end{proof}

  \begin{proposition}\label{thr}

Let  $ \Omega $ be a regular bounded domain  of $\mathbb{H}^{1}$
satisfying $(\ref{boulexter})$ and $ \xi $  any point in $\Omega$.
 For any  positive continuous  function $f ,$  we have

  $$     \int_{B(0, r_{\Omega}(\xi)) } f(u) \ dv \leq C_{\Omega}(\xi) \int_{\Omega} f(U_{\xi}) \ dv $$
  where $ C_{\Omega}(\xi)= 16 \sqrt{2}\ \alpha_{\Omega}^{6}(\xi) C^{2} $
\begin{flushright}
    $\square$
\end{flushright}
  \end{proposition}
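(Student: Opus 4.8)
The plan is to reduce the claimed inequality to a pointwise comparison, in the level parameter $\tau$, of two weight functions produced by the co-area formula, and then to bound these weights using Pansu's isoperimetric inequality together with the properties of the harmonic radius already established.

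First I would apply the co-area formula of $\cite{18,19}$ to both sides. Writing $\Omega(\tau)=\{G_\xi>\tau\}$ and recalling that $U_\xi=\varphi(G_\xi)$ is constant, equal to $\varphi(\tau)$, on each level set $\{G_\xi=\tau\}$, I obtain
$$\int_{\Omega}f(U_\xi)\,dv=\int_0^\infty f(\varphi(\tau))\,I_\Omega(\tau)\,d\tau,\qquad I_\Omega(\tau):=\int_{\partial\Omega(\tau)}\frac{1}{|\nabla G_\xi|}\,d\Sigma_{\Omega(\tau)},$$
together with the analogous identity for the Koranyi ball $B:=B(0,r_\Omega(\xi))$, with weight $I_B(\tau)$ attached to the level sets of $G_{\{B,0\}}$. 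Since $f(\varphi(\tau))\ge 0$, it suffices to prove the pointwise bound $I_B(\tau)\le C_\Omega(\xi)\,I_\Omega(\tau)$ for almost every $\tau>0$. A convenient reformulation is that co-area gives $|\Omega(\tau)|=\int_\tau^\infty I_\Omega(s)\,ds$, hence $I_\Omega(\tau)=-\frac{d}{d\tau}|\Omega(\tau)|$, and likewise $I_B(\tau)=-\frac{d}{d\tau}|B(\tau)|$.

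Next I would compute the ball weight explicitly. The level set of $G_{\{B,0\}}$ at height $\tau$ is the ball $B(0,r_{\Omega(\tau)}(\xi))$ by \eqref{rtau}; using the volume formula \eqref{ballvolume} and differentiating $|B(\tau)|=\frac{\pi^2}{2}r_{\Omega(\tau)}^4(\xi)$ in $\tau$ yields, after a short computation, the clean identity
$$I_B(\tau)=16\sqrt2\,\bigl|B(0,r_{\Omega(\tau)}(\xi))\bigr|^{3/2}.$$
For the domain weight I would argue from below. Applying Cauchy--Schwarz on $\partial\Omega(\tau)$ with respect to the measure $|\nabla G_\xi|^{-1}d\Sigma$ and invoking Theorem \ref{greentheorem} gives
$$P_{\mathbb{H}^1}(\Omega(\tau))^2=\Bigl(\int_{\partial\Omega(\tau)}\frac{|\nabla_{\mathbb{H}^1}G_\xi|}{|\nabla G_\xi|}\,d\Sigma\Bigr)^2\le\Bigl(\int_{\partial\Omega(\tau)}\frac{|\nabla_{\mathbb{H}^1}G_\xi|^2}{|\nabla G_\xi|}\,d\Sigma\Bigr)\,I_\Omega(\tau)=I_\Omega(\tau),$$
where the perimeter identity is Corollary \ref{phd} applied to $G_\xi-\tau$. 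Pansu's inequality (Theorem \ref{Pansu's theorem}) then gives $I_\Omega(\tau)\ge C^{-2}|\Omega(\tau)|^{3/2}$.

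Finally I would combine the two estimates through Corollary \ref{importantcorollary}, which bounds $|B(0,r_{\Omega(\tau)}(\xi))|\le\alpha_\Omega^4(\xi)\,|\Omega(\tau)|$. This yields
$$I_B(\tau)=16\sqrt2\,|B(0,r_{\Omega(\tau)}(\xi))|^{3/2}\le 16\sqrt2\,\alpha_\Omega^6(\xi)\,|\Omega(\tau)|^{3/2}\le 16\sqrt2\,\alpha_\Omega^6(\xi)\,C^2\,I_\Omega(\tau),$$
which is exactly $I_B(\tau)\le C_\Omega(\xi)\,I_\Omega(\tau)$ with $C_\Omega(\xi)=16\sqrt2\,\alpha_\Omega^6(\xi)C^2$; integrating against $f(\varphi(\tau))\ge 0$ gives the proposition. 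The main obstacle I anticipate is not the chain of inequalities but the regularity needed to run it: the identities of Theorem \ref{greentheorem}, Corollary \ref{phd}, and Pansu's inequality all require $\partial\Omega(\tau)$ to be a $C^1$ surface, which holds only for almost every $\tau$ (Sard's theorem, with $\tau$ a regular value of $G_\xi$), and one must also justify differentiating $|\Omega(\tau)|$ and interchanging the $\tau$-integral with the co-area representation. These measure-theoretic points are precisely why the comparison is asserted for almost every level and why the argument only needs to survive after integration in $\tau$.
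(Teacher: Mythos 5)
Your proposal is correct and follows essentially the same route as the paper: co-area formula on both integrals, the Cauchy--Schwarz argument combining Theorem \ref{greentheorem} and Corollary \ref{phd} to bound the level-set weight of $\Omega$ from below by $C^{-2}|\Omega(\tau)|^{3/2}$ via Pansu's inequality, the explicit computation $8\pi^3 r_{\Omega(\tau)}^6(\xi)=16\sqrt{2}\,|B(0,r_{\Omega(\tau)}(\xi))|^{3/2}$ for the ball weight, and Corollary \ref{importantcorollary} to close the chain. The only cosmetic difference is that you reduce to a pointwise comparison of the two weights in $\tau$ and obtain the ball weight by differentiating $|B(0,r_{\Omega(\tau)}(\xi))|$ via \eqref{rtau}, whereas the paper computes it directly on the Koranyi spheres; the substance is identical.
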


 \begin{proof}
  We apply the co-area formula, we obtain

  $$ \begin{array}{lll}

  \displaystyle \int_{\Omega} f ( U_{\xi}(\eta)) \ dv  & = & \displaystyle \int_{0}^{\infty} \left( \int_{ \left\{ G_{\xi} = \tau \right\} } \frac{ f ( \varphi ( G_{\xi} ( \eta) )) }{ \left| \nabla G_{\xi } (\eta) \right| } d\Sigma_{\Omega(\tau)} \right) d\tau  \\ \\

 & = & \displaystyle \int_{0}^{\infty} f ( \varphi(\tau) ) \left( \int_{ \left\{ G_{\xi} = \tau \right\} } \frac{ 1 }{ \left| \nabla G_{\xi } (\eta) \right| } d\Sigma_{\Omega(\tau)} \right) d\tau.

   \end{array} $$

On one hand,

 $$ \begin{array}{lll}

  \displaystyle \int_{ \left\{ G_{\xi} = \tau \right\} } \frac{ 1 }{ \left| \nabla G_{\xi } (\eta) \right| } d\Sigma_{\Omega(\tau)} & = & \displaystyle \int_{ \left\{ G_{\xi} = \tau \right\} } \frac{ \left| \nabla_{\mathbb{H}^1}G_{\xi}(\eta)  \right|^2 }{ \left| \nabla G_{\xi } (\eta) \right| } d\Sigma_{\Omega(\tau)} . \displaystyle \int_{ \left\{ G_{\xi} = \tau \right\} } \frac{ 1 }{ \left| \nabla G_{\xi } (\eta) \right| } d\Sigma_{\Omega(\tau)} \\ \\

  & \geq & \displaystyle \left( \int_{ \left\{ G_{\xi} = \tau \right\} } \frac{ \left| \nabla_{\mathbb{H}^1}G_{\xi}(\eta)  \right| }{ \left| \nabla G_{\xi } (\eta) \right| } d\Sigma_{\Omega(\tau)} \right)^2 \\ \\

  & = & \displaystyle \left( P_{\mathbb{H}^1}( \left\{ G_{\xi} >\, \tau \right\} ) \right)^2

\end{array} $$

We apply Theorem \ref{Pansu's theorem} to obtain
  $$\displaystyle \int_{ \left\{ G_{\xi} = \tau \right\} } \frac{ 1 }{ \left| \nabla G_{\xi } (\eta) \right| } d\Sigma_{\Omega(\tau)}  \geq  \displaystyle \frac{1}{C^2} \left| \left\{ G_{\xi} >\, \tau \right\}
  \right|^{\frac{3}{2}}$$

  On the other hand, $ \displaystyle G_{\left\{ B(0, r_{\Omega}(\xi)), 0\right\}} = \frac{1}{8\pi \rho^2(\eta)} - \frac{1}{8\pi r^2(\xi)} $,
   so by denoting $ G_{ \left\{ B(0, r_{\Omega}(\xi)), 0\right\}} = \tilde{G}_{0}, $  we have

  $$ \begin{array}{lll}

  \displaystyle \int_{B(0, r_{\Omega}(\xi))} f ( u(\eta)) \ dv & = & \displaystyle \int_{0}^{ +\infty} \left( \int_{ \left\{ \tilde{G}_{0} = \tau \right\} } \frac{ f ( \varphi ( \tilde{G}_{0} ( \eta) ) ) }{ \left| \nabla \tilde{G}_{0} (\eta) \right| } d\Sigma_{ \left\{ \tilde{G}_{0} > \tau \right\} } \right) d\tau \\ \\

  & = & \displaystyle \int_{0}^{ +\infty} f ( \varphi(\tau) ) \left( \int_{ \left\{ \tilde{G}_{0}= \tau \right\} } \frac{ 1 }{ \left| \nabla  \tilde{G}_{0} (\eta) \right| } d\Sigma_{ \left\{ \tilde{G}_{0} > \tau \right\} } \right) d\tau \\ \\

  & = & \displaystyle  \int_{0}^{ +\infty} f ( \varphi(\tau) ) \left(4 \pi (r_{\Omega(\tau)}(\xi))^3 \int_{ \left\{ \rho(\eta) = r_{\Omega(\tau)}(\xi) \right\} } \frac{ 1 }{ \left| \nabla  \rho (\eta) \right| } d\Sigma_{B(0, r_{\Omega(\tau)}(\xi))} \right) d\tau.

 \end{array} $$

 Recall the volume of the Koranyi's ball $B(0, r_{\Omega(\tau)}(\xi))$

 $$ \int_{ B(0, r_{\Omega(\tau)}(\xi)) } dv = \left| B(0, r_{\Omega(\tau)}(\xi) ) \right| = \frac{\pi^2}{2} (r_{\Omega(\tau)}(\xi))^4 .$$

 Applying again the co-area formula, we find

 $$ \int_{ B(0, r_{\Omega(\tau)}(\xi)) } dv = \int_{0}^{ r_{\Omega(\tau)}(\xi)} \left( \int_{ \left\{ \rho(\eta) = r \right\} } \frac{ 1 }{ \left| \nabla  \rho (\eta) \right| } d\Sigma_{B(0, r)} \right) dr $$

 thus

 $$ \int_{ \left\{ \rho(\eta) = r_{\Omega(\tau)}(\xi) \right\} } \frac{ 1 }{ \left| \nabla  \rho (\eta) \right| } d\Sigma_{B(0, r_{\Omega(\tau)}(\xi))} = \frac{ \partial |(B(0, r_{\Omega(\tau)}(\xi)))|} { \partial r_{\Omega(\tau)}(\xi)} = 2 \pi^2 (r_{\Omega(\tau)}(\xi))^3 $$

 which gives  using ($\ref{importantcorollary}$),

 $$ \begin{array}{lll}

  \displaystyle \int_{B(0, r_{\Omega}(\xi))} f ( u(\eta) ) \ dv & = & \displaystyle \int_{0}^{ +\infty} f ( \varphi(\tau) )  8\pi^3 (r_{\Omega(\tau)}(\xi))^6 \ d\tau \\ \\

 & = & \displaystyle  \int_{0}^{ +\infty} f ( \varphi(\tau)) 16\sqrt{2} \left| B(0, r_{\Omega(\tau)}(\xi) ) \right|^{\frac{3}{2}} \ d\tau \\ \\

 & \leq & \displaystyle  \int_{0}^{ +\infty} f ( \varphi(\tau) )\alpha_{\Omega}^{6}(\xi) 16 \sqrt{2}  \left|  \left\{ G_{\xi} > \tau \right\}   \right|^{\frac{3}{2}} \ d\tau \\ \\

 & \leq & \displaystyle \alpha_{\Omega}^{6}(\xi)\, 16 \sqrt{2}\, C^{2}\,  \int_{0}^{ +\infty} f ( \varphi(\tau) ) \left( \int_{ \left\{ G_{\xi} = \tau \right\} } \frac{ 1 }{ \left| \nabla G_{\xi } (\eta) \right| } d\Sigma_{ \left\{ G_{\xi} = \tau \right\} } \right) d\tau \\ \\

 & \leq & C_{\Omega}(\xi) \int_{\Omega} f ( U_{\xi} (\eta) ) \ dv.

 \end{array}  $$
 \end{proof}

 \section{ The first eigenvalue of the Kohn-Laplace operator }
 In this section, we will use  Proposition $ \ref{thr} $  to find an upper bound for the first eigenvalue $\lambda_{1}$
 of a regular bounded domain in the Heisenberg group $ \mathbb{H}^1 $  associated to  the following problem:

 \begin{equation}  (P_{\Omega}) \left\{ \begin{array}{lllll}
      -\Delta_{\mathbb{H}^1} u & = & \lambda u & \mbox{in} & \Omega  \\
     u & = & 0 & \mbox{on} & \partial \Omega  \\
      \end{array} \right.  \end{equation}

 called The Kohn-Laplace problem. We recall that $ \Delta_{\mathbb{H}^1} = X^2 + Y^2 $ is the Kohn-Laplace operator in the Heisenberg group
  $ \mathbb{H}^1 $, where $ \displaystyle X = \frac{ \partial }{ \partial x } + 2y\frac{ \partial }{ \partial t} $ and
  $ \displaystyle Y = \frac{ \partial }{ \partial y } - 2x\frac{ \partial }{ \partial t} $. \\
The existence of the eigenvalues of $(P_{\Omega}) $ was proved  by
X.Luo and P.Niu in $ \cite{10, 11, 12} $ by using the Kohn
inequality ($ \cite{13} $)
for the two vector fields $ { X, Y } $ with the spectral properties of compact operators. \\

The problem $(P_{\Omega})$ does not admit a radial solution in the
ball case, which imposes a constraint for the use of  the method of
  Harmonic transplantation in the order to find   an upper bound for the first eigenvalue as in the Euclidean case.
 Indeed, if $ u $ is $\rho -$radial, so there exists a real function  $ f $ such that $ u(\eta) = f(\rho(\eta)) $, thus

 $$ \Delta_{\mathbb{H}^1} u(\eta) = \frac{ x^2+ y^2 }{ \rho^2(\eta)} \left( f''( \rho(\eta)) + \frac{3}{ \rho(\eta)}f'( \rho(\eta)) \right) $$

 and by using the polar coordinates, we obtain

 $$  \Delta_{\mathbb{H}^1} u(\eta) = \sin(\theta) \left( f''(r) + \frac{3}{ r}f'(r) \right) . $$

 So, to obtain a radial solution of the Kohn-Laplace problem for the Koranyi's ball  $ B(0, R) ,$ we have to solve the following problem :

  $$   \left\{ \begin{array}{lll}
 -\sin(\theta) \left( f''(r) + \frac{3}{r}f'(r) \right) & = & \lambda f(r) \ \ in \ ]0, R[\times ]0, \Pi [ \\
 f(R) & = & 0 \\
 \end{array} \right. $$

 which obviously does not admit a radial solution.

The idea to find an  upper bound for the first eigenvalue of
$(P_{\Omega})$ is to perturb  $(P_{B(0, R)})$   and consider the
following problem

 \begin{equation} \label{eqn2} \left\{ \begin{array}{lllll}
 -\Delta_{\mathbb{H}^1} u & = & \mu(R) \psi u & \mbox{in} & B(0,R)\backslash\{0\} \\
  u & = & 0 &  \mbox{on} & \partial B(0,R)
 \end{array} \right. \end{equation}

 where $ \displaystyle \psi(\eta)= \frac{ x^2+ y^2 }{ \rho^2(\eta)} = \left| \nabla_{\mathbb{H}^1}\rho \right|^2 $. We will prove
  that problem ($\ref{eqn2}$) admits a radial solution given by  Bessel functions. To this aim, let us consider the function

 $$ J_{m}(l) = \left( \frac{l}{2} \right)^m \sum_{k=0}^{\infty} \frac{(-1)^k}{k! \Gamma(m+k+1) } \left( \frac{l}{2} \right)^{2k} $$

 which is the Bessel function of first kind with order $ m,$ it  is a solution of the following equation

 $$ \frac{d^2 y}{dx^2}+ \frac{1}{x} \frac{dy}{dx} + \left( 1- \frac{m^2}{x^2} \right) y = 0, $$

called Bessel equation, we refer to $\cite{20}.$  The function $
J_{m}(l) $ admits an infinity of simple and positive zeros:

 $$ 0 < l_{m1} < l_{m2} < ..., \ J_{m}(l_{mj}) = 0, \ j \in \huge{N^*} $$.

   The function Gamma $ \Gamma $ is defined on $ ]0, \infty[ $, by $ \Gamma(x) = \int_{0}^{+\infty} \exp^{-t} t^{x-1} dt $

 and verifies $ \Gamma(n) = (n-1)! ,$ \,\,$\forall n \in N^{*} .$\\

 To find a radial solution for problem ($\ref{eqn2}$), firstly, we have to solve the following problem

 \begin{equation} \label{fr}  \left\{ \begin{array}{lllll}
   f''(r) + \frac{3}{r}f'(r) + \mu(R) f(r) & = & 0 & on & ]0, R[ \\

 f(R) & = & 0.
 \end{array} \right. \end{equation}

 We consider $ f(r) = \displaystyle \frac{1}{r} g(r) $ with $ r > 0 $, so

 $$ \begin{array}{lll}

 \displaystyle f'(r) & = & \displaystyle -\frac{1}{r^2} g(r) + \frac{1}{r} g'(r) \\ \\

 \displaystyle f''(r) & = & \displaystyle \frac{2}{r^3} g(r) - \frac{2}{r^2} g'(r)+ \frac{1}{r} g''(r)

 \end{array} $$

thus

 $$ f''(r) + \frac{3}{r}f'(r) + \mu(R) f(r) = 0 \Leftrightarrow g''(r) + \frac{1}{r}g'(r) + (\mu(R)-\frac{1}{r^2} ) g(r) = 0 $$

 and performing the change of variables  $ l =  \sqrt{\mu(R)} r ,$ we obtain

 $$  g''(r) + \frac{1}{r}g'(r) + (\mu(R)-\frac{1}{r^2} ) g(r) = 0 \Leftrightarrow g''(l) + \frac{1}{l}g'(l) + (1-\frac{1}{l^2} ) g(l) = 0 .$$

 The equation

 $$  g''(l) + \frac{1}{l}g'(l) + (1-\frac{1}{l^2} ) g(l) = 0 .$$

admits  the Bessel function$ J_{1}(l) $ as  solution. Then $
J_{1}(\sqrt{\mu}\, r) $ is a solution of

  $$  g''(r) + \frac{1}{r}g'(r) + (\mu(R)-\frac{1}{r^2} ) g(r) = 0 $$

  which implies that $ \displaystyle \frac{1}{r}J_{1}(\sqrt{\mu}\, r) $ is a solution of

  $$  f''(r) + \frac{3}{r}f'(r) + \mu(R) f(r) = 0. $$

 So  problem ($\ref{fr}$) admits as eigenvalues and associated eigenfunctions respectively

 $$ \mu_{1j}(R) = \frac{ l_{1j}^2 }{R^2}, \ f_{1j}(r) = \displaystyle \frac{1}{r} J_{1} \left( \frac{l_{1j}r}{R} \right), \ j = 1, 2, ....  $$

where the $ l_{1j}$ 's are the strictly positive zeros of the Bessel function $ J_{1}(l) .$ \\
 Next, we remark that on $B(0, R)\setminus\{0\},$ we have

 $$  \Delta_{\mathbb{H}^1} f_{1j}(\rho(\eta)) + \mu_{1j}(R) \psi f_{1j}(\rho(\eta)) = 0 \,\,  \forall j = 1, 2, ... $$

 Therefore,  the radial eigenfunctions and the associated eigenvalues of  problem ($\ref{fr}$)

   $$ u_{1j}(\eta) = f_{1j}(\rho(\eta)), \ \ \ \mu_{1j}(R) = \frac{ l_{1j}^2 }{R^2}, \,\, \forall j = 1, 2, ... $$

 are respectively solutions of problem $(\ref{eqn2}).$  Hence, we derive the following result

   \begin{corollary} \label{solution gradient} For $ j = 1, 2, ... ,$  we have

   $$ \int_{B(0, R)} \left| \nabla_{\mathbb{H}^1}u_{1j} \right|^2 \ dv = \mu_{1j}(R) \int_{B(0, R)} \psi u_{1j}^2 \ dv $$\begin{flushright}
    $\square$
\end{flushright}

   \end{corollary}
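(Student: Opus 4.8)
The plan is to read the asserted equality as the weak (variational) formulation of the perturbed eigenvalue problem \eqref{eqn2}. I would start from the pointwise identity $-\Delta_{\mathbb{H}^1} u_{1j} = \mu_{1j}(R)\,\psi\,u_{1j}$, which holds on $B(0,R)\setminus\{0\}$, multiply both sides by the eigenfunction $u_{1j}$ and integrate against $dv$. The whole matter then reduces to the integration-by-parts (Green's first) identity for the Kohn--Laplace operator: since the horizontal fields $X$ and $Y$ are divergence-free for the volume $dv$ (which coincides with Lebesgue measure), one has, for a smooth domain $D$,
$$\int_{D} u_{1j}\,\Delta_{\mathbb{H}^1} u_{1j}\, dv = -\int_{D} \left| \nabla_{\mathbb{H}^1} u_{1j}\right|^2 dv + \int_{\partial D} u_{1j}\,\langle \nabla_{\mathbb{H}^1} u_{1j}, N\rangle\, d\Sigma_{D},$$
where $N$ is the outer unit normal, in the same convention used in the proof of Theorem \ref{greentheorem}. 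Substituting the equation into the bulk term turns $-\int_D u_{1j}\Delta_{\mathbb{H}^1}u_{1j}\,dv$ into $\mu_{1j}(R)\int_D \psi\,u_{1j}^2\,dv$, which is exactly the right-hand side of the corollary, provided the boundary term drops out.

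Controlling the boundary term is where the argument must be done with care, and it splits into two pieces because the equation is valid only away from the origin. I would apply the identity above on the punctured ball $D_{\eps}=B(0,R)\setminus\overline{B(0,\eps)}$, whose boundary is $\partial B(0,R)\cup\partial B(0,\eps)$. On $\partial B(0,R)$ the contribution vanishes immediately since $u_{1j}=0$ there by construction. It then remains to show that the flux through the inner sphere $\partial B(0,\eps)$ tends to $0$ as $\eps\to 0$, after which letting $\eps\to 0$ (using that the single point $\{0\}$ has measure zero, so the bulk integrals over $D_\eps$ converge to those over $B(0,R)$) yields the claim.

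The main obstacle, therefore, is the behaviour of $u_{1j}$ near the pole. Here I would exploit the explicit form $u_{1j}(\eta)=f_{1j}(\rho(\eta))$ with $f_{1j}(r)=\frac{1}{r}J_{1}\!\left(\frac{l_{1j}r}{R}\right)$ together with the expansion $J_{1}(s)\sim s/2$ as $s\to 0$: this shows $f_{1j}(r)\to \frac{l_{1j}}{2R}$, so $u_{1j}$ is bounded (indeed continuous) at the origin, and moreover $f_{1j}'(r)=O(r)$, whence $|\nabla_{\mathbb{H}^1}u_{1j}|=|f_{1j}'(\rho)|\,|\nabla_{\mathbb{H}^1}\rho|=O(\rho)$ because $|\nabla_{\mathbb{H}^1}\rho|=\sqrt{\psi}\le 1$. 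Consequently the integrand $u_{1j}\,\langle\nabla_{\mathbb{H}^1}u_{1j},N\rangle$ on $\partial B(0,\eps)$ is $O(\eps)$, while the surface measure of the Koranyi sphere of radius $\eps$ is controlled consistently with $|B(0,\eps)|=\frac{\pi^2}{2}\eps^4$ from \eqref{ballvolume}, so the flux is a positive power of $\eps$ and vanishes in the limit. The same bounds show $\nabla_{\mathbb{H}^1}u_{1j}\in L^2(B(0,R))$, so every integral in the statement is finite. Alternatively, one can bypass the excision entirely by running the co-area computation of Proposition \ref{gradconserve} directly, but the integration-by-parts route makes the role of the eigenvalue equation most transparent.
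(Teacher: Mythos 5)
Your proposal is correct and follows essentially the same route the paper intends: the corollary is presented there as an immediate consequence of $u_{1j}$ solving \eqref{eqn2}, i.e.\ multiplying the equation by $u_{1j}$ and integrating by parts, with the boundary term on $\partial B(0,R)$ killed by the Dirichlet condition. The only addition in your write-up is the careful excision of the origin and the verification via $J_{1}(s)\sim s/2$ that $u_{1j}$ and $\nabla_{\mathbb{H}^1}u_{1j}$ stay bounded there, a detail the paper leaves implicit but which is needed and which you handle correctly.
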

Now, we  are ready to state our main result
   \begin{theorem} \label{eigenvalue}
Let $ \Omega $ be a  bounded domain of $ \mathbb{H}^1 $ with smooth
boundary, such that $\Omega$ and $\mathbb{H}^{1}\setminus
\bar{\Omega}$ satisfy the uniform exterior ball  property
(\ref{boulexter}) and let $\xi$ be any point in  $\Omega,$ we have

   $$ \lambda_{1}( \Omega ) \leq    C_{\Omega}(\xi) \displaystyle \frac{l_{11}^2}{r_{\Omega}^2(\xi)}$$

   where $ l_{11} $ is the first strictly positive zero of the Bessel function $ J_{1}(l)$  and $ r_{\Omega}(\xi) $ is the harmonic radius
   of $ \Omega $ at $ \xi.$\begin{flushright}
    $\square$
\end{flushright}

   \end{theorem}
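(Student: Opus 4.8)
The plan is to use the first radial eigenfunction of the perturbed problem (\ref{eqn2}) on the Koranyi ball $B(0, r_\Omega(\xi))$ as the seed for a harmonic transplantation into $\Omega$, and then to feed the resulting function into the Rayleigh quotient for $\lambda_1(\Omega)$. Concretely, I would set $R = r_\Omega(\xi)$ and take $u = u_{11}$, the eigenfunction associated with the first strictly positive zero $l_{11}$. Since $J_1$ is strictly positive on $]0, l_{11}[$ and vanishes at $l_{11}$, the function $u_{11}(\eta) = f_{11}(\rho(\eta)) = \rho^{-1} J_1(l_{11}\rho/R)$ is strictly positive on $B(0,R)\setminus\{0\}$ and vanishes on $\partial B(0,R)$, which is exactly the positivity and boundary behavior needed to form its harmonic transplantation $U_\xi = \varphi(G_\xi)$. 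Because $G_\xi$ vanishes on $\partial\Omega$ and $\varphi(0)=0$ (as $u$ vanishes where $G_{\left\{ B(0,R),0 \right\}}=0$), the transplant $U_\xi$ lies in $S^{1,2}_0(\Omega)$ and is therefore an admissible trial function for the Rayleigh principle.

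Next I would estimate the two integrals in the Rayleigh quotient separately. For the numerator, Proposition \ref{gradconserve} gives $\int_\Omega |\nabla_{\mathbb{H}^1} U_\xi|^2\, dv = \int_{B(0,R)} |\nabla_{\mathbb{H}^1} u|^2\, dv$, and Corollary \ref{solution gradient} evaluates this exactly as $\frac{l_{11}^2}{R^2}\int_{B(0,R)} \psi\, u^2\, dv$, where $\psi = |\nabla_{\mathbb{H}^1}\rho|^2$. For the denominator I would invoke Proposition \ref{thr} with the positive continuous function $f(s) = s^2$, which yields $\int_{B(0,R)} u^2\, dv \le C_\Omega(\xi)\int_\Omega U_\xi^2\, dv$.

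The link between the two estimates is the elementary pointwise bound $\psi = \frac{x^2+y^2}{\rho^2} \le 1$, which follows from $\rho^4 = (x^2+y^2)^2 + t^2 \ge (x^2+y^2)^2$. Chaining it with the two displays above gives $\int_{B(0,R)} \psi\, u^2 \le \int_{B(0,R)} u^2 \le C_\Omega(\xi)\int_\Omega U_\xi^2$, and hence by the Rayleigh principle $\lambda_1(\Omega) \le \frac{\int_\Omega |\nabla_{\mathbb{H}^1} U_\xi|^2}{\int_\Omega U_\xi^2} = \frac{l_{11}^2}{R^2}\cdot \frac{\int_{B(0,R)}\psi u^2}{\int_\Omega U_\xi^2} \le C_\Omega(\xi)\frac{l_{11}^2}{r_\Omega^2(\xi)}$, which is the claim.

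The conceptual obstacle this argument is designed to overcome is that, as noted just before (\ref{eqn2}), the Koranyi ball admits no radial eigenfunction of the genuine Kohn-Laplace operator, so one cannot simply transplant a true ground state as in the Euclidean case. Introducing the weight $\psi$ restores a solvable, Bessel-type radial ODE, and the price paid, namely the factor $\psi$ appearing in the Dirichlet integral, is then discarded harmlessly through $\psi \le 1$. Technically, the points that require the most care are verifying that $U_\xi \in S^{1,2}_0(\Omega)$ so that the Rayleigh principle applies, and confirming that the hypotheses of Propositions \ref{gradconserve} and \ref{thr} hold for the specific choice $u = u_{11}$; once these are in place the estimate is a direct concatenation of the results already established.
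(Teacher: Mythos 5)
Your proposal is correct and follows essentially the same route as the paper: transplant the first radial eigenfunction $u_{11}$ of the weighted problem on $B(0,r_{\Omega}(\xi))$ into $\Omega$, control the numerator by Proposition \ref{gradconserve} together with Corollary \ref{solution gradient}, control the denominator by Proposition \ref{thr} applied to $f(s)=s^{2}$, and pass from $\int\psi u_{11}^{2}$ to $\int u_{11}^{2}$ via $\psi=\lvert\nabla_{\mathbb{H}^1}\rho\rvert^{2}\le 1$. The only difference is one of exposition: you make explicit the bound $\psi\le 1$ and the admissibility of $U_{\xi}$ in $S^{1,2}_{0}(\Omega)$, both of which the paper uses silently.
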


  \begin{proof}

  Using Rayleigh's principle, the first eigenvalue of problem $(P_{\Omega})$ is
  given by

   $$ \lambda_{1}( \Omega )=\displaystyle  \inf_{ v \in  S^{1,2}_{0}(\Omega) }\displaystyle \frac { \displaystyle\int_{\Omega} \left| \nabla_{\mathbb{H}^1} v \right|^2}{\displaystyle \int_{\Omega} v^2}.  $$

 Hence
 $$ \lambda_{1}( \Omega )\leq \displaystyle \frac{\displaystyle\int_{\Omega} \left| \nabla_{\mathbb{H}^1} U_{\xi, 11} \right|^2}{\displaystyle\int_{\Omega} U_{\xi, 11} ^2}$$

 where $ U_{\xi, 11} $ is the harmonic transplantation of  $ u_{11} $ in  $ \Omega .$

 Using  Propositions $\ref{gradconserve},$ $ \ref{thr}$ and Corollary \ref{solution gradient}, we obtain

  $$ \begin{array}{lll}

   \lambda_{1}( \Omega )& \leq & C_{\Omega}(\xi)  \frac{\displaystyle \int_{B(0, r_{\Omega}(\xi))} \left| \nabla_{\mathbb{H}^1}u_{11} \right|^2 \ dv}{\displaystyle \int_{B(0, r_{\Omega}(\xi))}u_{11}^{2} \ dv} \\ \\

   & \leq & C_{\Omega}(\xi) \frac{ \displaystyle \int_{B(0, r_{\Omega}(\xi))} \left| \nabla_{\mathbb{H}^1}u_{11} \right|^2 \ dv}{\displaystyle \int_{B(0, r_{\Omega}(\xi))} \psi u_{11}^{2} \ dv} \\ \\

   & \leq & C_{\Omega}(\xi) \mu_{11}(r_{\Omega}(\xi)) =  C_{\Omega}(\xi) \displaystyle \frac{l_{11}^2}{r_{\Omega}^2(\xi)}.

   \end{array}  $$\begin{flushright}
    $\square$
\end{flushright}
   \end{proof}

We can refine the result of the theorem above, more precisely we
have

  \begin{theorem}\label{Harmonic center}
Let $ \Omega $ be a  bounded domain of $ \mathbb{H}^1 $ with smooth
boundary, such that $\Omega$ and $\mathbb{H}^{1}\setminus
\bar{\Omega}$ satisfy the uniform exterior ball  property
(\ref{boulexter}) and let $\xi$ be any point in $\Omega,$ we have
   $$ \lambda_{1}( \Omega ) \leq C_{\Omega}\frac{ l_{11}^2 }{ \displaystyle \max_{ \xi \in \Omega } r_{\Omega}^2(\xi)}$$

   with $ C_{\Omega} = \displaystyle \min_{ \xi \in A_{\Omega}} C_{\Omega}(\xi)
   ,$ where  $A_{\Omega}$ is the set of harmonic centers of $\Omega.$\begin{flushright}
    $\square$
\end{flushright}

   \end{theorem}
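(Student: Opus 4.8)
The plan is to specialize the pointwise estimate of Theorem \ref{eigenvalue} to the harmonic centers of $\Omega$ rather than to an arbitrary point. Recall that the set $A_\Omega$ of harmonic centers consists precisely of those points at which the harmonic radius attains its maximum, so that $r_\Omega(\xi_0) = \max_{\xi \in \Omega} r_\Omega(\xi)$ for every $\xi_0 \in A_\Omega$. Since $r_\Omega$ is continuous on $\Omega$ and, by the Boundary Behavior property, tends to $0$ as $\xi \to \partial\Omega$, it attains its maximum at an interior point, so $A_\Omega$ is nonempty.

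First I would apply Theorem \ref{eigenvalue} at an arbitrary harmonic center $\xi_0 \in A_\Omega$. Because $r_\Omega^2(\xi_0)$ then equals the maximal value $\max_{\xi \in \Omega} r_\Omega^2(\xi)$, the inequality becomes
$$\lambda_1(\Omega) \leq C_\Omega(\xi_0)\, \frac{l_{11}^2}{\displaystyle\max_{\xi \in \Omega} r_\Omega^2(\xi)}.$$
This holds for every $\xi_0 \in A_\Omega$, whereas the left-hand side is a number independent of $\xi_0$. The key step is therefore to minimize the right-hand side over all harmonic centers: passing to the infimum over $\xi_0 \in A_\Omega$ gives
$$\lambda_1(\Omega) \leq \Bigl(\min_{\xi_0 \in A_\Omega} C_\Omega(\xi_0)\Bigr) \frac{l_{11}^2}{\displaystyle\max_{\xi \in \Omega} r_\Omega^2(\xi)} = C_\Omega\, \frac{l_{11}^2}{\displaystyle\max_{\xi \in \Omega} r_\Omega^2(\xi)},$$
which is exactly the asserted refinement.

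The argument is immediate once Theorem \ref{eigenvalue} is available; the only delicate points are structural rather than computational. I would need to confirm that the minimum defining $C_\Omega = \min_{\xi \in A_\Omega} C_\Omega(\xi)$ is genuinely attained, which should follow from the continuity of $C_\Omega(\xi) = 16\sqrt{2}\,\alpha_\Omega^6(\xi)\,C^2$ in $\xi$ together with the compactness of $A_\Omega$ as a closed subset of the interior. The main obstacle I anticipate is thus verifying that $A_\Omega$ is a closed, bounded set so that this minimum makes sense; once that is settled, the continuity of $\alpha_\Omega(\xi)$ is what makes the refinement meaningful, since the new constant $C_\Omega$ is no larger than the value $C_\Omega(\xi_0)$ furnished by Theorem \ref{eigenvalue} at a generic harmonic center.
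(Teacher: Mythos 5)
Your argument is correct and coincides with the paper's intended reasoning: the paper offers no separate proof, treating the theorem as an immediate consequence of Theorem \ref{eigenvalue} evaluated at the harmonic centers (where $r_\Omega$ attains its maximum) and then minimizing $C_\Omega(\xi)$ over $A_\Omega$, exactly as you do. Your additional remarks on the nonemptiness and compactness of $A_\Omega$ and the continuity of $\xi\mapsto C_\Omega(\xi)$ are sensible points the paper leaves implicit.
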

If $\Omega$ is a convex (Euclidean) domain of $\mathbb{H}^1 ,$ then
it has only one harmonic center. As an example, we will give the
case of a Koranyi's ball.
   \begin{example}

   In the case of the Koranyi's ball  $ B(\xi, R) $, of center  $ \xi \in \mathbb{H}^1 $ and radius $R$, using the result of
    Example $ \ref{examplealpha}$ and  Theorem $\ref{eigenvalue},$ we obtain

   $$ \lambda_{1}(B(\xi, R)) \leq C_{\Omega}(\xi)\displaystyle \frac {l_{11}^2}{ R^2}=  16 (\displaystyle\frac{3}{\pi})^{\frac{1}{2}}\displaystyle \frac {l_{11}^2}{ R^2}\\\\$$
\begin{flushright}
    $\square$
\end{flushright}
\end{example}

\par As a continuation of this work, we can propose these open
questions:
   \begin{enumerate}
   \item Compare the first  eigenvalue of $(P_{\Omega}),$   with the first
   eigenvalue of $(P_{\Omega'}),$ where $\Omega'$ is an isoperimetric
   region having  the same volume as $\Omega.$
    \item  Compare  the first
   eigenvalue of $(P_{\Omega'}),$ where $\Omega'$ is an isoperimetric
   region with the first eigenvalue
   of $(P_{\mathbb{B} (0, R)}),$  $\mathbb{B}(0, R)$  is the bubble set with the same volume as
   $\Omega'.$\\
   \item After completing questions 1. and 2., one can attempt  to find an analogous
   formula for the one given by Hersch in the Euclidian settings by using perhaps different
   methods.

\end{enumerate}

\textbf{E-mails:}  ngamara@taibahu.edu.sa, najoua.gamara@fst.rnu.tn\\
        \;\;\;\;\; akremmakni@gmail.com

\textbf{Addresses:} College of Sciences, Taibah University, Saudi
Arabia.\;\;\;\;\;\;\; \\ University Tunis El Manar, F.S.T,
Mathematics Department,
          Tunisia.

\end{document}